\documentclass[12pt,article]{amsart}
\usepackage{amsmath,color}
\usepackage{amssymb}
\newtheorem{corollary}[equation]{Corollary}

\newtheorem{claim}[equation]{\indent{\it Claim}\rm }

\newtheorem{lemma}[equation]{Lemma}

\newtheorem{theorem}[equation]{Theorem}

\newcommand{\C}{{\mathbb{C}}}

\renewcommand{\P}{{\mathbb{P}}}

\newcommand{\R}{{\mathbb{R}}}

\newcommand{\supp}{\mathrm{Supp}\,}

\numberwithin{equation}{section}

\title[Total curvature of a complete minimal surface]{Total curvature of a complete minimal surface and the modified defect relation of a Fermat hypersurface for the Gauss map} 

\author{Si Duc Quang}
\author{Nguyen Thi Quynh Chi}
\address{Department of Mathematics, Hanoi National University of Education\\
136-Xuan Thuy, Cau Giay, Hanoi, Vietnam}
\email{quangsd@hnue.edu.vn,	ntqchi2004@gmail.com}

\begin{document}

\begin{abstract}
In this paper, we establish some modified defect relations for the Gauss map $g$ of a complete minimal surface $S\subset\mathbb R^m$ into $\mathbb P^n(\mathbb C)\ (n=m-1)$ with only a single Fermat hypersurface $Q$ of $\mathbb P^n(\mathbb C)$.  In particular, we show that $S$ must have finite total curvature if the image $g(S)$ intersects $Q$ with only a finite number of times and the degree of $Q$ is sufficiently large.
\end{abstract}

\maketitle

\def\thefootnote{\empty}
\footnotetext{
2010 Mathematics Subject Classification:
Primary 53A10, 53C42; Secondary 30D35, 32H30.\\
\hskip8pt Key words and phrases: Gauss map, value distribution, holomorphic curve, modified defect relation, ramification, hypersurface, total curvature.}


\section{Introduction and Main results} 

Let $f$ be a nonconstant holomorphic curve from $\C$ into $\P^n(\C)$ and $H$ be a hyperplane in $\P^n(\C)$. The defect of $H$ for $f$ is defined by 
$$\delta^k_f(H)=1-\underset{r\longrightarrow +\infty}{\mathrm{lim\ sup}}\frac{N^{[k]}_f(r,H_i)}{T_f(r)},$$  
where $T_f(r)$ is the characteristic function of $f$ and $N^{[k]}_f(r,H)$ is the truncated (to level $k$) counting function of $f^{-1}(H)$ (see \cite{NO} for the definitions). We see that if $f$ intersects $H$ with only a finite number of times then $\delta^k_f(H)=1$.

Let $\{H_i\}_{i=1}^q$ be a family of hyperplanes of $\P^n(\C)$ in general positon, i.e., $\bigcap_{0\le j\le n}H_{i_j}=\varnothing$ for any $1\le i_0<\cdots<i_n\le q$. From the second main theorem in Nevanlinna theory (due to H. Cartan \cite{Ca} and E. Nochka \cite{Noc83}) we get the defect relation that 
$$\sum_{i=1}^q\delta^k_f(H_i)\le 2n-k+1$$ 
if the image of $f$ is contained in a linear subspace of $k$-dimension of $\P^n(\C)$ but not in any subspace with dimension less than $k$. 
 Therefore, if $f$ intersects $q$ hyperplanes $\{H_i\}_{i=1}^q$ with only a finite number of times then $f$ is linearly degenerate for the case $q\ge n+2$, and $f$ must be constant for the case $q\ge n+2.$

In the late 1990s, H. Fujimoto extended these results to the Gauss maps of complete minimal surfaces by introducing the concept of the modified defect for holomorphic curves on minimal Riemann surfaces in $\R^n$ (see \cite{Fu89, Fu90, Fu91}), which serves as an analogue to the classical defect relation in Nevanlinna theory. Fujimoto established several modified defect relations for the Gauss maps of minimal surfaces and used them to investigate various properties of these maps. For instance, he showed that if the Gauss map of a nonflat complete minimal surface immersed in $\R^{n+1}$, whose total curvature is infinite, intersects $q$ hyperplanes in general position of $\P^{n}(\C)$ with only a finite number of times then $q\le n+1+\frac{n(n+1)}{2}$ (see \cite[Corollary 1.4]{Fu91}). Subsequently, many authors have extended the results of Fujimoto to the case where the Gauss maps are ramified over families of hyperplanes, meaning the multiplicity of the intersection points between the Gauss maps and the hyperplanes exceeds a certain threshold. For further details, we refer the reader to the papers \cite{Ha18a,Ha18b,HTP,Ru93} and the references therein. Especially, recently the first author \cite{Q24} generalized all these results to the case of the Gauss maps and the families hypersurfaces in subgeneral position. In this paper, we will study the Gauss maps of complete minimal surfaces under the condition of intersection with only a single Fermat hypersurface. Under this condition, we will derive more refined estimates compared to the results in the previous paper. In order to state the main results, we first recall the following essential notions (see \cite{Fu89,Fu90,Fu91} and \cite{Q24}).

 Let $f$ be a holomorphic curve from a Riemann surface $S$ into $\P^n(\C)$ and $Q$ a hypersurface in $\P^n(\C)$ of degree $d$. By $\nu_{Q(f)}$ we denote the pullback divisor $Q$ by $f$. 
Let $K$ be a compact subset of $S$ and $A=S\setminus K$. 

The $S$-defect truncated to level $k$ of $Q$ for $f$ (over $A$) is defined by 
$$\delta^{S,k}_{f,A}(Q):=1- \mathrm{inf}\{\eta\ |\ \eta \text{ satisfies the condition }(*)_S\}.$$
Here, the condition $(*)_S$ means that there exists a $[-\infty,\infty)$-valued continuous subharmonic function $u\ (\not\equiv -\infty)$ on $A$ satisfying the following conditions:
\begin{itemize}
\item[$(D_1)$] $e^u\le \|F\|^{d\eta}$,
\item [$(D_2)$] for each $\xi\in f^{-1}(Q)$ there exists the limit 
$$\underset{z\rightarrow\xi}\lim (u(z)-\min\{\nu_{Q(f)}(z),k\}\log|z -\xi|) \in[-\infty,\infty),$$
where $z$ is a holomorphic local coordinate around $\xi$.
\end{itemize}

The $H$-defect truncated to level $k$ of $Q$ for $f$ (over $A$) is defined by
$$\delta^{H,k}_{f,A}(Q):=1- \mathrm{inf}\{\eta\ |\ \eta \text{ satisfies the condition }(*)_H\}.$$
Here, the condition $(*)_H$ means that there exists a $[-\infty,\infty)$-valued continuous subharmonic function $u\ (\not\equiv\infty)$ on $A$, which is harmonic on $A\setminus f^{-1}(Q)$ and satisfies conditions $(D_1),(D_2)$.

Let $u$ be a complex values function on a domain $D\subset S$. The function $u$ is said to have mild singularities if it is of $\mathcal C^\infty$ on $D$ outside a discrete subset $E$ and every point $a\in D$ has a neighborhood $U$ such that for a holomorphic local coordinate $z$ with $z(a)=0$ on $U$ we can write
$$ |u(z)|=|z|^\alpha u^*(z)\prod_{j=1}^q\left|\log\dfrac{1}{|g_j(z)|v_j(z)}\right|^{r_j}$$
on $U\setminus E$ with some real number $\sigma$, non-positive real numbers $r_j$, nonzero holomorphic functions $g_j$ with $g_j(0)=0$ and some positive $\mathcal C^{\infty}$ functions $u^*$ and $v_j$ on $U$, where $0\le q<\infty$.

The modified $H$-defect truncated to level $k$ of $Q$ for $f$  is defined by
$$D^k_{f}(Q) := 1 - \mathrm{inf}\{1-\eta\ |\ \eta \text{ satisfies the condition (*)}.\}$$
Here, the condition (*) means that: there are a compact subset $K$ of $S$, a divisor $\nu$ and a continuous real-valued bounded function $k$ with mild singularities on $S\setminus K$, a positive constant $c$ such that $\nu(z)>c$ for each $z\in\supp (\nu)$ and
$$ [\min\{m,\nu_{Q(f)}\}]+[\nu]=d\eta f^*\Omega+dd^c[\log|k|^2] $$
in the sense of current, where $\Omega$ is the Fubini-Study form of $\P^n(\C)$. 


By \cite[Proposition 2.3]{Fu90} and \cite[Remark 5.3]{Fu91}, one has
\begin{align*}
0 \le  \delta^{H,k}_{f,A}(Q) \le  \delta^{S,k}_{f,A}(Q)\le  1\text{ and }0 \le \delta^{H,k}_{f,A}(Q)\le D^{k}_{f}(Q)\le 1.
\end{align*}

Let $Q_1,...,Q_q\ (q\ge n+1)$ be $q$ hypersurfaces in $\P^n(\C)$. The family $\{Q_i\}_{i=1}^q$ is said to be in general position if 
$$ \bigcap_{j=1}^{n+1}Q_{i_j}=\varnothing \ \forall\ 1\le i_1<\cdots <i_{n+1}.$$
Throughout this paper, sometimes we will denote by the same notation $Q$ for the defining homogeneous polynomial of the hypersurface $Q$ if there is no confusion.

Our first main result is stated as follows.
 
\begin{theorem}\label{1.1} 
Let $S$ be a complete minimal surface in $\R^{n+1}\ (n\ge 2)$ with the Gauss map $g$, which has a reduced representation $G=(g_0,\ldots,g_n)$. Let $Q_0,\ldots,Q_n$ be $n+1$ hypersurfaces of $\P^n(\C)$ of the same degree $\ell$ in general position and let $Q$ be a Fermat hypersurface of $\P^n(\C)$ defined by the form
$$Q=\sum_{j=0}^na_jQ_j^d,$$
where $a_0,\ldots,a_n$ are nonzero constants.
Assume that the map $(Q_0^d(G),\ldots,Q_n^d(G))$ is linearly nondegenerate and 
$$D^{n}_{g}(Q)>\frac{n(n+1)}{d}+\frac{n(n+1)}{2\ell d}.$$
Then $S$ has finite total curvature.
\end{theorem}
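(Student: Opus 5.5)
We argue by contradiction, following Fujimoto's scheme for modified defect relations. Suppose $S$ has infinite total curvature. The idea is to reduce the Fermat hypersurface to hyperplanes. Consider the holomorphic map
$$F=(a_0Q_0^d(G),\ldots,a_nQ_n^d(G)):S\to\P^n(\C).$$
By hypothesis $F$ is linearly nondegenerate. Let $H_0,\ldots,H_n$ be the coordinate hyperplanes and $H_{n+1}=\{w_0+\cdots+w_n=0\}$; these $n+2$ hyperplanes are in general position. Then $Q(G)=H_{n+1}(F)$.

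Since $Q_0,\ldots,Q_n$ are in general position, there are constants $c_1,c_2>0$ with $c_1\|G\|^{\ell}\le\max_j|Q_j(G)|\le c_2\|G\|^{\ell}$. This gives $\|F\|\asymp\|G\|^{\ell d}$, and hence $F^*\Omega=\ell d\, g^*\Omega+dd^c\log(\text{bounded positive function})$.

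Next we transfer the defects. The pullback divisor $\nu_{H_{n+1}(F)}$ equals $\nu_{Q(g)}$. Truncation at level $n$ is the same for both, and the degree normalization matches because $\deg Q=\ell d$. It follows that $D^n_F(H_{n+1})\ge D^n_g(Q)$.

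For the coordinate hyperplanes $H_j$, every zero of $Q_j^d(G)$ has multiplicity at least $d$. The truncated divisor $\min\{n,\nu_{H_j(F)}\}$ is therefore at most $\frac{n}{d}\nu_{H_j(F)}$. This yields $D^n_F(H_j)\ge 1-\frac nd$ for each $0\le j\le n$, via the current identity with $\eta=n/d$ and the harmonic term $\log|Q_j^d(G)|/\|F\|$.

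We then apply Fujimoto's modified defect relation for linearly nondegenerate maps and hyperplanes in general position (\cite{Fu91}, in the form used in \cite{Q24}). The Gauss map of $S$ has the property $ds^2=\|G\|^2|\omega|^2$, so the metric is controlled by $\|F\|^{1/(\ell d)}$, i.e. by $F^*\Omega$ scaled by $1/(\ell d)$. The relation we expect to use reads
$$\sum_{j=0}^{n+1}D^n_F(H_j)\le n+1+\frac{n(n+1)}{2\ell d}$$
whenever $S$ is complete with infinite total curvature. The correction term $\frac{n(n+1)}{2}$ is scaled by the factor $\frac1{\ell d}$ because the metric is $\|F\|^{2/(\ell d)}|\omega|^2$ rather than $\|F\|^2|\omega|^2$.

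The main obstacle is establishing this scaled relation. One builds the Fujimoto pseudo-metric
$$\lambda=\Big(\frac{\|F\|^{\sigma}|W(F)|\prod\|F_k\|^{\ldots}}{\prod_j |H_j(F)|^{\ldots}}\Big)^{\rho}$$
with a suitable exponent $\rho$ tied to $1/(\ell d)$. We must check that $\lambda^{\cdot}|\omega|$ is flat with the right singular behaviour, and then apply Fujimoto's lemma: a complete flat metric on a surface of infinite total curvature leads to a contradiction via Yau's/Ahlfors–Schwarz estimate of $\|W\|$. The exponent balancing is where the term $\frac{n(n+1)}{2\ell d}$ arises.

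Plugging in the defect bounds gives
$$(n+1)\Big(1-\frac nd\Big)+D^n_g(Q)\le n+1+\frac{n(n+1)}{2\ell d},$$
that is,
$$D^n_g(Q)\le\frac{n(n+1)}d+\frac{n(n+1)}{2\ell d}.$$
This contradicts the hypothesis, so $S$ has finite total curvature.
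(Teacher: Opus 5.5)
Your reduction is correct and is essentially the paper's own argument. The paper uses the same nondegenerate curve $F=(Q_0^d(G),\ldots,Q_n^d(G))$ with the hyperplanes $H_0,\ldots,H_{n+1}$, and the same multiplicity saving $\min\{n,\nu_{H_j(F)}\}\le\frac{n}{d}\nu_{H_j(F)}$ for $0\le j\le n$ (built directly into the metric through the exponents $1-\frac{n}{d}$ and Theorem~\ref{thm3.5}). It also uses the same comparison $\|F\|\asymp\|G\|^{\ell d}$, which produces the factor $\frac{1}{\ell d}$ on $\frac{n(n+1)}{2}$. The one difference is that you cite the scaled relation as a Fujimoto-type result with $\rho=\frac{1}{\ell d}$, whereas the paper proves it inline for these weights. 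There, the Schwarz-lemma estimate of Lemmas~\ref{lem3.7} and~\ref{ML} gives completeness of the flat metric (Claim~\ref{cl4.2}), and Huber's theorem together with Corollary~\ref{cor2.9} then rules out essential singularities at the ends. So Yau's estimate, which your sketch invokes, is not used for this theorem.
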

Since $\delta^{H,n}_{g,A}(Q)\le D^{n}_{g}(Q)$, from Theorem \ref{1.1} we have the following corollary.
\begin{corollary}\label{1.2} 
Let $S,\{Q_i\}_{i=0}^n,Q,g$ be as in Theorem \ref{1.1}, $K$ a compact subset of $S$ and $A=S\setminus K$. Assume that the map $(Q_0^d(G),\ldots,Q_n^d(G))$ is linearly nondegenerate and
$$\delta^{H,n}_{g,A}(Q)>\frac{n(n+1)}{d}+\frac{n(n+1)}{2\ell d}.$$
Then $S$ has finite total curvature.
\end{corollary}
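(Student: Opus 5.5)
The corollary follows from Theorem \ref{1.1} through the comparison $\delta^{H,n}_{g,A}(Q)\le D^{n}_{g}(Q)$ recalled above from \cite[Proposition 2.3]{Fu90} and \cite[Remark 5.3]{Fu91}. No new analytic estimate is needed. We only have to check that the notions of defect are comparable in our setting and that the hypotheses carry over.

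First, fix $K$ and $A=S\setminus K$ as in the statement. Set $\eta_0=\inf\{\eta\ |\ \eta \text{ satisfies }(*)_H\}$, so that
$$\delta^{H,n}_{g,A}(Q)=1-\eta_0>\frac{n(n+1)}{d}+\frac{n(n+1)}{2\ell d}.$$
Choose $\eta$ slightly larger than $\eta_0$ that still satisfies $(*)_H$ and still has
$$1-\eta>\frac{n(n+1)}{d}+\frac{n(n+1)}{2\ell d}.$$
This gives a continuous subharmonic function $u$ on $A$ that is harmonic off $g^{-1}(Q)$ and satisfies $(D_1)$ and $(D_2)$ with truncation level $n$.

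Next, I would convert $u$ into the data required by condition (*), as Fujimoto does in \cite[Remark 5.3]{Fu91}.
\begin{itemize}
\item Because $u$ is harmonic outside $g^{-1}(Q)$ and has the logarithmic singularities prescribed by $(D_2)$, its Laplacian as a current is $dd^c[u]=[\min\{n,\nu_{Q(g)}\}]$ on $A$, up to normalization.
\item Set $|k|^2=e^{u}/\|G\|^{d\eta}$. By $(D_1)$ this function is bounded by $1$.
\item Since $dd^c\log\|G\|^2=g^*\Omega$, we get
$$[\min\{n,\nu_{Q(g)}\}]=d\eta\, g^*\Omega+dd^c[\log|k|^2]$$
on $A$, with $\nu=0$. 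Here the factor $\tfrac12$ from $\log\|G\|$ versus $\log\|G\|^2$ has to be absorbed by choosing $|k|$ appropriately.
\item Mild singularities of $k$ hold because $u$ is locally $\log|h|$ plus a harmonic function near points of $g^{-1}(Q)$, and $\log\|G\|$ is smooth.
\end{itemize}

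This shows that $\eta$ satisfies (*). Taking the infimum gives $D^n_g(Q)\ge 1-\eta$. Since $\eta$ could be taken arbitrarily close to $\eta_0$, we conclude
$$D^{n}_{g}(Q)\ge\delta^{H,n}_{g,A}(Q)>\frac{n(n+1)}{d}+\frac{n(n+1)}{2\ell d}.$$
The linear nondegeneracy of $(Q_0^d(G),\ldots,Q_n^d(G))$ is assumed directly, so every hypothesis of Theorem \ref{1.1} holds, and $S$ has finite total curvature.

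The main obstacle is the bookkeeping in the passage from $(*)_H$ to (*). Two points need care. One is the normalization constants in $dd^c$. The other is that (*) allows a compact exceptional set, which is exactly the set $K$ here. Since both of these are handled by the cited remark of Fujimoto, I would simply invoke it.
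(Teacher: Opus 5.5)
Your proposal is correct and follows the paper's argument: the paper deduces the corollary in one line from Theorem \ref{1.1} via the comparison $\delta^{H,n}_{g,A}(Q)\le D^{n}_{g}(Q)$ quoted from Fujimoto, and carries over the nondegeneracy hypothesis directly. One small inaccuracy is in your optional unpacking of that comparison. Since $(D_2)$ allows the limit to be $-\infty$, $e^u$ may vanish to order strictly larger than $\min\{n,\nu_{Q(g)}\}$ at points of $g^{-1}(Q)$, so $dd^c[u]$ can carry extra point mass and the divisor $\nu$ in (*) need not be $0$. Because you, like the paper, defer this bookkeeping to Fujimoto's remark, the argument still stands.
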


Note that, if $g$ is ramified over a hypersurface $Q$ with multiplicity at least $m$ on $A$ then $\delta^{H,n}_{g,A}(Q)\ge 1-\dfrac{n}{m}$. Hence, from Corollary \ref{1.2}, we immediately get the following corollary on the ramification of the Gauss map.
\begin{corollary}\label{1.4}
Let $S,\{Q_i\}_{i=0}^n,Q,g$ be as in Theorem \ref{1.1}. Assume that $g$ is ramified over $Q$ with multiplicity at least $m$ on $A$ and
$$1-\frac{n}{m}>\frac{n(n+1)}{d}+\frac{n(n+1)}{2\ell d}.$$
Then $S$ has finite total curvature.

In particular, if $g$ intersects only a finite number of times the hypersurface $Q$ and 
$$d>n(n+1)\left(1+\frac{1}{2\ell}\right),$$
then $S$ must have finite total curvature.
\end{corollary}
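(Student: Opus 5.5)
We deduce Corollary \ref{1.4} from Corollary \ref{1.2}, so the whole argument reduces to the lower bound $\delta^{H,n}_{g,A}(Q)\ge 1-\frac{n}{m}$ stated just before the corollary. Once we have this bound, the hypothesis $1-\frac nm>\frac{n(n+1)}{d}+\frac{n(n+1)}{2\ell d}$ gives the hypothesis of Corollary \ref{1.2}, and hence $S$ has finite total curvature. The nondegeneracy assumption on $(Q_0^d(G),\ldots,Q_n^d(G))$ is carried over from Theorem \ref{1.1} through "as in Theorem \ref{1.1}".

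To prove the bound, we exhibit a function $u$ satisfying $(*)_H$ with $\eta=\frac nm$. Since $g$ is ramified over $Q$ with multiplicity at least $m$ on $A$, every $\xi\in g^{-1}(Q)\cap A$ has $\nu_{Q(g)}(\xi)\ge m$, and therefore
$$\min\{\nu_{Q(g)},n\}\le \frac nm\,\nu_{Q(g)}.$$
Take
$$u=\frac{n}{m}\log|Q(G)|,$$
where $Q(G)=\sum_j a_jQ_j(G)^d$ is a holomorphic function on $A$. This $u$ is subharmonic, harmonic off $g^{-1}(Q)$, and not identically $-\infty$, since $g(S)\not\subset Q$ by the nondegeneracy assumption.

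We now check the two conditions. For $(D_1)$, we have $|Q(G)|\le C\|G\|^{\ell d}$. Normalizing the constant, or replacing $Q$ by $Q/C$, which changes nothing, gives $e^u\le\|G\|^{\ell d\cdot n/m}$, which is $(D_1)$ with $\eta=\frac nm$ for the hypersurface of degree $\ell d$. For $(D_2)$, near $\xi$ we write $Q(G)=z^{\nu}h$ with $h(\xi)\neq0$ and $\nu=\nu_{Q(g)}(\xi)$. Then
$$u-\min\{\nu,n\}\log|z|=\Big(\frac{n\nu}{m}-\min\{\nu,n\}\Big)\log|z|+\frac nm\log|h|.$$
The coefficient of $\log|z|$ is $\ge 0$, so the limit exists in $[-\infty,\infty)$. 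Hence $\delta^{H,n}_{g,A}(Q)\ge 1-\frac nm$.

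For the "in particular" part, suppose $g^{-1}(Q)$ is finite. We choose the compact set $K$ to contain $g^{-1}(Q)$. Then $g$ does not meet $Q$ on $A$, so it is vacuously ramified with any multiplicity $m$. Letting $m\to\infty$, the condition becomes $1>\frac{n(n+1)}{d}\big(1+\frac1{2\ell}\big)$, which is exactly $d>n(n+1)(1+\frac1{2\ell})$; alternatively, we can take $u=0$ directly to get $\delta^{H,n}_{g,A}(Q)=1$. The only mild obstacle is the normalization in $(D_1)$, which is handled by scaling the defining polynomial.
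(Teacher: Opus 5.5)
Your proof is correct and follows the paper's route: the paper also deduces this from Corollary \ref{1.2} via the bound $\delta^{H,n}_{g,A}(Q)\ge 1-\frac{n}{m}$, which it only asserts and you verify explicitly with $u=\frac{n}{m}\log|Q(G)|$ (normalized so that $|Q(G)|\le\|G\|^{\ell d}$), and you get the finite-intersection case by putting $g^{-1}(Q)$ inside $K$. As you note, the linear nondegeneracy of $(Q_0^d(G),\ldots,Q_n^d(G))$ has to be read into the hypotheses, both because Corollary \ref{1.2} requires it and because it guarantees $Q(G)\not\equiv 0$.
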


Our second purpose in this paper is to give an $S$-defect relation as follows. 
\begin{theorem}\label{1.3} 
Let $S$ be a complete minimal surface with finite total curvature in $\R^{n+1}\ (n\ge 2)$ and $\{Q_i\}_{i=0}^n,Q,g$ as in Theorem \ref{1.1}. Assume that the map $(Q_0^d(G),\ldots,Q_n^d(G))$ is linearly nondegenerate. Then we have
$$\delta^{S,n}_{g,S}(Q)\le\frac{n(n+1)}{d}+\frac{n(n+1)}{2\ell d}.$$
\end{theorem}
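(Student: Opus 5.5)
We will follow the standard finite-total-curvature strategy of Fujimoto: reduce to a holomorphic curve on a compact Riemann surface minus finitely many points and apply a second main theorem of Cartan type to an auxiliary linearly nondegenerate curve. By Osserman's theorem, since $S$ has finite total curvature, $S$ is conformally equivalent to $\overline S\setminus\{p_1,\ldots,p_r\}$ with $\overline S$ compact, and the Gauss map $g$ extends holomorphically to $\overline S$. We may take the reduced representation $G$ locally and work with divisors on $\overline S$. Consider the auxiliary map $F=(Q_0^d(G):\cdots:Q_n^d(G))\colon\overline S\to\P^n(\C)$, which is linearly nondegenerate by hypothesis. The hypersurface $Q$ pulls back to the hyperplane $H=\{\sum a_jw_j=0\}$ for $F$, and the coordinate hyperplanes $w_j=0$ pull back to $d\,\nu_{Q_j(g)}$. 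Thus the $n+2$ hyperplanes $\{w_0,\ldots,w_n,H\}$ are in general position, since all $a_j\neq 0$. Also $F^*\Omega$ has degree $\ell d\cdot\deg g^*\Omega$, because the $Q_j$ are in general position and have no common zero.

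Next, apply the Wronskian argument (Cartan's lemma on compact Riemann surfaces). Let $W$ be the Wronskian of $F$ with respect to a local coordinate. The identity $\sum_j \nu_{\text{hyperplanes}}-\nu_W\le\sum\min\{\nu,n\}$ gives, after integrating $dd^c\log$ of the standard auxiliary function $\frac{|W|\|F\|^{?}}{\prod|w_j|\,|H(F)|}$ over $\overline S$ (Gauss--Bonnet for the change of local coordinates), the inequality
$$(n+2-(n+1))\deg F^*\Omega\le \sum_{j}\deg\nu^{[n]}_{w_j(F)}+\deg\nu^{[n]}_{H(F)}+\frac{n(n+1)}{2}(2\gamma-2+r)+\text{(terms at } p_i).$$
The key point is that the zeros of $w_j(F)=Q_j(G)^d$ have multiplicity at least $d$, so their $n$-truncation is at most $\frac{n}{d}\nu_{w_j(F)}$. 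This gives $\sum_j\deg\nu^{[n]}_{w_j(F)}\le\frac{n}{d}(n+1)\deg F^*\Omega$. Moreover, $2\gamma-2+r$ is controlled by the total curvature: the standard estimate $2\gamma-2+r\le \deg g^*\Omega=\frac{1}{\ell d}\deg F^*\Omega$ comes from completeness (Jorge--Meeks / Chern--Osserman type inequality). Together these yield $\deg\nu^{[n]}_{Q(g)}\ge\bigl(1-\frac{n(n+1)}{d}-\frac{n(n+1)}{2\ell d}\bigr)\deg (d\,g^*\Omega)$.

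Finally, we relate this to the $S$-defect. Given $u$ satisfying $(*)_S$ with constant $\eta$, $u-\eta d\log\|G\|$ is subharmonic away from the zeros and bounded above. Integrating $dd^c u\ge[\min\{\nu_{Q(g)},n\}]$ against the compactification, and using that $u$ has controlled growth at the punctures (via $(D_1)$ and the extension of $g$), gives $\deg\nu^{[n]}_{Q(g)}\le\eta\, d\deg g^*\Omega$. Combining with the previous step yields $1-\eta\le\frac{n(n+1)}{d}+\frac{n(n+1)}{2\ell d}$, and taking the infimum over $\eta$ gives the claim. We expect the main obstacle to be the behavior at the punctures $p_i$. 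Both the Wronskian estimate and the subharmonic comparison require care with the orders of $G$ and $u$ at the ends. For this we would follow Fujimoto's treatment in \cite{Fu89,Fu91}, using that the orders of the Weierstrass data at $p_i$ produce exactly the $\frac{n(n+1)}{2}$ factor times $r$ absorbed by the curvature inequality.
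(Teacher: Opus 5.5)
Your argument is correct in outline, but it takes a genuinely different route from the paper.

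The paper argues by contradiction inside Fujimoto's framework. From $u$ and the contact functions of $F$ it builds a continuous pseudo-metric $d\tau^2$ of strictly negative curvature on $S$. It then applies the Ahlfors--Schwarz lemma on the universal cover; the case of $\C$ is handled by a Nevanlinna-type result. Next it uses the conformal type $S\cong\overline S\setminus\{P_1,\ldots,P_r\}$ to get finite $d\tau^2$-area near the ends. Finally it writes $d\tau^2=e^w ds^2$ with $w$ subharmonic, so that the Yau--Karp theorem on the complete surface forces infinite area. This last step is exactly where $1-\theta-\frac{n(n+1)}{d}>\frac{\sigma_n}{\ell d}$ is needed.

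You instead use the holomorphic extension of $g$ to $\overline S$ and count degrees, with $\gamma$ the genus of $\overline S$, $L=F^*\mathcal O(1)$ and $D_F=\ell d\deg g$.
\begin{itemize}
\item Since $W(F)(dz)^{\sigma_n}$ is a section of $L^{n+1}\otimes K_{\overline S}^{\sigma_n}$, the pointwise Wronskian inequality for the $n+2$ hyperplanes gives $D_F\le\frac{n(n+1)}{d}D_F+\deg_{\overline S}\nu^{[n]}_{Q(g)}+n(n+1)(\gamma-1)$.
\item The zeros of $Q(G)$ at the punctures contribute at most $nr$ to $\deg_{\overline S}\nu^{[n]}_{Q(g)}$.
\item By Chern--Osserman, $n(n+1)(\gamma-1)+nr\le\frac{n(n+1)}{2}(2\gamma-2+2r)\le\frac{n(n+1)}{2}\deg g$.
\end{itemize}
So $\frac{n(n+1)}{2\ell d}$ appears as the canonical-bundle term controlled by the total curvature. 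Completeness enters through Chern--Osserman rather than Yau--Karp.

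Your route is direct and more elementary: no negative-curvature metric and no covering dichotomy. It also shows that the ``obstacle at the ends'' you anticipate does not arise. The Wronskian count on $\overline S$ involves only $2\gamma-2$, with no Weierstrass data to track, just at most $nr$ truncated zeros of $Q(G)$ at the $p_j$. The paper's route, in exchange, runs parallel to Theorem \ref{1.1} and needs neither the extension of $g$ nor Chern--Osserman. Yours depends essentially on finite total curvature and would not carry over to that setting.

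Two small repairs are needed.
\begin{itemize}
\item $u-\eta\,\ell d\log\|G\|$ is not subharmonic. What you need is that $v:=u-\eta\,\ell d\log\|G\|\le 0$ satisfies $dd^c v\ge-\frac{\eta\ell d}{2}g^*\Omega$, with $g^*\Omega$ smooth on $\overline S$. Hence $v$ extends quasi-subharmonically across each $p_j$ and $\int_{\overline S}dd^c v=0$. Combined with the Lelong-number bound $dd^c u\ge\frac12[\min\{\nu_{Q(g)},n\}]$ from $(D_2)$, this gives $\deg_S\nu^{[n]}_{Q(g)}\le\eta\,\ell d\deg g$.
\item The $d$ in $\deg(d\,g^*\Omega)$ and in $(D_1)$ must be read as $\deg Q=\ell d$.
\end{itemize}
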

 
Since $ \delta^{H,n}_{f,S}(Q) \le  \delta^{S,n}_{f,S}(Q)$, then by combining Corollary \ref{1.2} and Theorem \ref{1.3}, we also immediately get the following $H$-defect relation.
\begin{corollary}\label{1.6}
Let $S,\{Q_i\}_{i=0}^n,Q$ and $g$ be as in Theorem \ref{1.1}. Assume that the map $(Q_0^d(G),\ldots,Q_n^d(G))$ is linearly nondegenerate. Then we have
$$\delta^{H,n}_{g,S}(Q)\le \frac{n(n+1)}{d}+\frac{n(n+1)}{2\ell d}.$$
\end{corollary}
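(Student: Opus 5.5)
Corollary \ref{1.6} follows by splitting into two cases according to whether $S$ has finite total curvature, and applying Corollary \ref{1.2} or Theorem \ref{1.3} accordingly.

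\emph{Case 1: $S$ has infinite total curvature.} Argue by contradiction and suppose
$$\delta^{H,n}_{g,S}(Q)> \frac{n(n+1)}{d}+\frac{n(n+1)}{2\ell d}.$$
Take $K=\varnothing$, which is compact, so that $A=S\setminus K=S$. The hypotheses of Corollary \ref{1.2} are then met:
\begin{itemize}
\item $S$, $\{Q_i\}_{i=0}^n$, $Q$ and $g$ are as in Theorem \ref{1.1};
\item the map $(Q_0^d(G),\ldots,Q_n^d(G))$ is linearly nondegenerate;
\item the defect inequality holds with $A=S$.
\end{itemize}
Corollary \ref{1.2} therefore gives that $S$ has finite total curvature, a contradiction. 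Hence the desired bound holds in this case.

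\emph{Case 2: $S$ has finite total curvature.} Theorem \ref{1.3} applies directly and gives
$$\delta^{S,n}_{g,S}(Q)\le\frac{n(n+1)}{d}+\frac{n(n+1)}{2\ell d}.$$
Combine this with the general inequality $\delta^{H,n}_{g,S}(Q)\le \delta^{S,n}_{g,S}(Q)$ (\cite[Proposition 2.3]{Fu90}). This yields the claim.

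The only point to check is the use of $K=\varnothing$ in Case 1. An empty $K$ is compact, and the definitions of the defects over $A$ only require $K$ to be a compact subset, so nothing else is needed. If one preferred a nonempty $K$, one would instead use that $\delta^{H,n}_{g,A}(Q)$ is nondecreasing as $A$ shrinks: a function $u$ admissible on $S$ restricts to an admissible $u$ on $A\subset S$. Either route finishes the argument, and there is no real obstacle.
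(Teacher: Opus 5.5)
Your proof is correct and follows the paper's own route: the paper obtains Corollary \ref{1.6} by combining Corollary \ref{1.2} with Theorem \ref{1.3} and the inequality $\delta^{H,n}_{g,S}(Q)\le\delta^{S,n}_{g,S}(Q)$, which is exactly your case split on whether the total curvature is finite. You also make explicit the one detail the paper leaves implicit, namely applying Corollary \ref{1.2} with $A=S$, and either $K=\varnothing$ or the monotonicity of the $H$-defect as $A$ shrinks handles this correctly.
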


\section{Defect relation of holomorphic curves from a punctured disk into a projective variety with a family of hypersurfaces}

\noindent
{\bf A. Nevanlinna functions.}\ For each $s>0$, we define the punctured disk
$$\Delta_{s,\infty}=\{z\in\C\ |\ s\le |z|<\infty\}.$$
For a divisor $\nu$ on an open neighborhood of $\Delta_{s,\infty}$ and for a positive integer $p$ or $p= \infty$, we set $n^{[p]}(t,s) =\sum\limits_{s\le |z|\leq t}\min\{p,\nu (z)\}$ and define the counting function of $\nu$ by
$$N^{[p]}(r,s,\nu)=\int\limits_{s}^r \dfrac {n^{[p]}(t,s)}{t}dt \quad (s<r<\infty).$$
Let $\varphi$ be a meromorphic function on $\Delta_{s,\infty}$. We will write $N_{\varphi}^{[p]}(r,s)$ for $N^{[p]}(r,s,\nu^0_{\varphi}),$ where  $\nu^0_\varphi$ is the zero divisor of $\varphi$. For brevity, we will omit the character $^{[p]}$ if $p=\infty$. 

The proximity function of $\varphi$ (with respect to the point $\infty$) is defined by
$$ m(r,s,\varphi)=\int\limits_{0}^{2\pi}\log^+|\varphi(re^{i\theta})|\dfrac{d\theta}{2\pi}-\int\limits_{0}^{2\pi}\log^+|\varphi(se^{i\theta})|\dfrac{d\theta}{2\pi}\ (s<r<\infty),$$
where $\log^+x=\log(\max\{0,x\})$ for a real number $x$. The Nevanlinna's characteristic function of $\varphi$ is defined by
$$ T(r,s,\varphi):=m(r,s,\varphi)+N_{1/\varphi}(r,s).$$

Throughout this paper, we fix a homogeneous coordinates system $(x_0:\cdots :x_n)$ on $\P^n(\C)$. Let $f : \Delta_{s,\infty} \longrightarrow \P^n(\C)$ be a holomorphic curve with a reduced representation $F = (f_0, \ldots,f_n)$ on $\Delta_{s,\infty}$. We set 
$$\|F\|=(|f_0|^2+\cdots+|f_n|^2)^{1/2}.$$
The characteristic function of $f$ is defined by 
$$ T_f(r,s)=\int_{0}^{2\pi}\log\|F(re^{i\theta})\|\dfrac{d\theta}{2\pi}-\int_{0}^{2\pi}\log\|F(se^{i\theta})\|\dfrac{d\theta}{2\pi}\ (s<r<R). $$
For a meromorphic function $\varphi$, which is regarded as a holomorphic curve into $\P^1(\C)$, then
$$ T_\varphi(r,s)=T(r,s,\varphi)+O(1).$$

\begin{theorem}\label{thm2.7} 
Let $\{Q_i\}_{i=0}^n$ be $n+1$ hypersurfaces of $\P^n(\C)$ in general position of the same degree $p$ and $Q$ a hypersurface given by $Q=\sum_{i=0}^na_iQ_i^d$, where $a_i\ (0\le i\le n)$ are nonzero constants and $d$ is a positive integer. Let $f$ be a holomorphic mapping of $\Delta_{s,\infty}$ into $\P^n(\C)$ with a reduced representation $F=(f_0,\ldots,f_n)$ such that $Q_0^d(F),\ldots,Q_n^d(F)$ are linearly independent. Then, we have
$$ \biggl \|\ \left(1-\frac{n(n+1)}{d}\right)T_f(r,s)\le \frac{1}{pd}N^{[n]}_{Q(f)}(r,s)+o(T_f(r,s))+O(\log r).$$
\end{theorem}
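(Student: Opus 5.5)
We follow the standard Fermat-type argument: Cartan's second main theorem on a punctured disk is applied to the auxiliary curve built from the powers $Q_i^d(F)$. Set $h_i=a_iQ_i^d(F)$ for $0\le i\le n$ and $h_{n+1}=-Q(F)$, so that
$$h_0+\cdots+h_n+h_{n+1}=0 .$$
Let $\Phi:\Delta_{s,\infty}\to\P^n(\C)$ be the curve with representation $H=(h_0,\ldots,h_n)$. By hypothesis the $h_i$ ($0\le i\le n$) are linearly independent, so $\Phi$ is linearly nondegenerate.

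\textbf{Step 1 (characteristic functions).} Consider the $n+2$ hyperplanes $x_0,\ldots,x_n$ and $x_0+\cdots+x_n$. They are in general position, and pulled back by $H$ they give $h_0,\ldots,h_n,-h_{n+1}$. The $Q_i$ are in general position, so by the Hilbert Nullstellensatz argument
$$\|F\|^{p}\ll\max_i|Q_i(F)|\ll\|F\|^p .$$
The $h_i$ need not be coprime; their common zeros are those of $\gcd(Q_0(F)^d,\ldots,Q_n(F)^d)$. Because of general position, this gcd has divisor supported where the $Q_i(F)$ vanish simultaneously, and that set is empty. Hence $H$ is a reduced representation, and
$$T_\Phi(r,s)=pd\,T_f(r,s)+O(1).$$

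\textbf{Step 2 (second main theorem).} Apply Cartan's second main theorem on $\Delta_{s,\infty}$ to the $q=n+2$ hyperplanes. With the logarithmic derivative lemma on the punctured disk this gives the error term $o(T)+O(\log r)$, and we obtain
$$\|\ T_\Phi(r,s)\le\sum_{i=0}^{n+1}N^{[n]}_{h_i}(r,s)+o(T_\Phi(r,s))+O(\log r).$$
For $0\le i\le n$ we bound $N^{[n]}_{h_i}\le n\,N^{[1]}_{Q_i(F)}\le n\,p\,T_f(r,s)+O(1)$ by the first main theorem. Summing over these $n+1$ terms gives $n(n+1)p\,T_f$. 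The last term is $N^{[n]}_{Q(f)}(r,s)$. Dividing by $pd$ yields
$$\Bigl(1-\frac{n(n+1)}{d}\Bigr)T_f(r,s)\le\frac1{pd}N^{[n]}_{Q(f)}(r,s)+o(T_f(r,s))+O(\log r).$$

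\textbf{Main obstacle.} The delicate point is having the second main theorem with truncation level $n$ available on $\Delta_{s,\infty}$ rather than on $\C$, with error term $o(T_f)+O(\log r)$. This needs the Wronskian and the lemma on logarithmic derivatives for meromorphic functions on a punctured disk: the boundary circle $|z|=s$ contributes only constants, and the logarithmic derivative estimate gives $O(\log r+\log T)$ outside an exceptional set of finite measure. We would prove it by the usual Cartan argument. At points where the Wronskian vanishes, the estimate
$$\nu_{W}-\sum\nu_{h_i}\ge-\sum\min\{n,\nu_{h_i}\}$$
is standard. Beyond this, we only need to check carefully the reducedness of $H$ in Step 1, which follows from general position of $Q_0,\ldots,Q_n$.
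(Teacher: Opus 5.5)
Your proposal is correct and follows the paper's proof essentially verbatim: you apply the second main theorem for $n+2$ hyperplanes in general position to the nondegenerate curve of $d$-th powers, use $N^{[n]}_{Q_i^d(F)}\le n\,N_{Q_i(F)}\le np\,T_f$, and use $T=pd\,T_f+O(1)$. Your normalization $h_i=a_iQ_i^d(F)$ with the hyperplane $x_0+\cdots+x_n$ is just a linear change of coordinates from the paper's $\sum a_ix_i$.

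One harmless nit: on $\Delta_{s,\infty}$ the first main theorem only gives $N_{Q_i(F)}(r,s)\le p\,T_f(r,s)+O(\log r)$, not $+O(1)$, because the inner circle $|z|=s$ contributes a winding-number term $k\log(r/s)$; this is absorbed into the $O(\log r)$ of the conclusion.
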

\begin{proof}
Let $\tilde f$ be the holomorphic curve with the reduced representation $(Q_0^d(F),\ldots,Q_n^d(F))$. Then $\tilde f$ is linearly nondegenerate. By the second main theorem for hyperplanes, we have
\begin{align*}
\bigl\|\ T_{\tilde f}(r,s)&\le\sum_{i=0}^nN^{[n]}_{Q_i^d(f)}(r,s)+N^{[n]}_{Q(f)}(r,s)+o(T_{\tilde f}(r,s))+O(\log r)\\
&\le n\sum_{i=0}^nN_{Q_i(f)}(r,s)+N^{[n]}_{Q(f)}(r,s)+o(T_{\tilde f}(r,s))+O(\log r)\\
&\le n(n+1)pT_f(r,s)+N^{[n]}_{Q(f)}(r,s)+o(T_{\tilde f}(r,s))+O(\log r).
\end{align*}
Since $T_{\tilde f}(r)=dpT_f(r,s)+O(1)$, the above inequality implies that
$$ \biggl \|\ \left(1-\frac{n(n+1)}{d}\right)T_f(r,s)\le \frac{1}{pd}N^{[n]}_{Q(f)}(r,s)+o(T_f(r,s))+O(\log r).$$
The theorem is proved.
\end{proof}

For a hypersurface $Q$ of degree $\ell$, if $f$ has an essential singularity at $\infty$ then the truncated defect of $Q$ for $f$ is defined by
$$ \delta^{[n]}_f(Q)=1-\underset{r\longrightarrow\infty}{\rm limsup}\dfrac{N_{Q(f)}^{[n]}(r,s)}{\ell dT_f(r,s)}.$$
As we known that
$$\delta^{H,n}_{f,\Delta_{s,\infty}}(Q)\le\delta_{f,\Delta_{s,\infty}}^{S,n}(Q)\le\delta_f^{[n]}(Q)\le 1.$$
Then, from Theorem \ref{thm2.7} we have the following corollary.
\begin{corollary}\label{cor2.9}
With the above notation, if $f$ has an essential singularity at $\infty$ then
$$\delta^{H,n}_{f,\Delta_{s,\infty}}(Q)\le\delta^{[n]}_f(Q)\le \dfrac{n(n+1)}{d}.$$
\end{corollary}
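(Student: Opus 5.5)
The first inequality $\delta^{H,n}_{f,\Delta_{s,\infty}}(Q)\le\delta^{[n]}_f(Q)$ is part of the chain of inequalities recorded just before the corollary, so we take it as given. The work is in the second inequality $\delta^{[n]}_f(Q)\le \frac{n(n+1)}{d}$, which we will read off from Theorem \ref{thm2.7}.

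By Theorem \ref{thm2.7}, for all $r$ outside an exceptional set $E\subset(s,\infty)$ of finite logarithmic measure,
$$\left(1-\frac{n(n+1)}{d}\right)T_f(r,s)\le \frac{1}{pd}N^{[n]}_{Q(f)}(r,s)+o(T_f(r,s))+O(\log r).$$
Here $Q$ has degree $pd$, which is the quantity $\ell d$ appearing in the definition of $\delta^{[n]}_f(Q)$. Dividing by $T_f(r,s)$ gives, for $r\notin E$,
$$1-\frac{n(n+1)}{d}\le \frac{N^{[n]}_{Q(f)}(r,s)}{pd\,T_f(r,s)}+o(1)+O\!\left(\frac{\log r}{T_f(r,s)}\right).$$

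The step that needs care is the error term $O(\log r/T_f(r,s))$. We use the hypothesis that $f$ has an essential singularity at $\infty$. This is equivalent to $\log r=o(T_f(r,s))$ as $r\to\infty$: if instead $T_f(r,s)=O(\log r)$ along a sequence $r\to\infty$, then the components $f_j/f_{j_0}$ have at most polynomial growth of their characteristic functions, and $f$ extends meromorphically across $\infty$. This is the standard Nevanlinna criterion on a punctured disk, and we would cite it. In particular $T_f(r,s)\to\infty$, so both error terms tend to $0$.

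Since $E$ has finite logarithmic measure, $(s,\infty)\setminus E$ contains arbitrarily large $r$. Taking $\limsup$ along such $r$ yields
$$\limsup_{r\to\infty}\frac{N^{[n]}_{Q(f)}(r,s)}{pd\,T_f(r,s)}\ge 1-\frac{n(n+1)}{d}.$$
By the definition of $\delta^{[n]}_f(Q)$, this says exactly $\delta^{[n]}_f(Q)\le \frac{n(n+1)}{d}$, which completes the proof.
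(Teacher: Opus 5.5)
Your proposal is correct and follows the same route as the paper. The paper also takes the first inequality from the chain quoted just before the corollary and reads off the second from Theorem~\ref{thm2.7}. You have made explicit the details it leaves implicit: dividing by $T_f(r,s)$, using the essential singularity to get $\log r=o(T_f(r,s))$, and taking the $\limsup$ along $r$ outside the exceptional set.
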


\section{Derived curves and a result of the generalized Schwarz lemma}

Let $S$ be an open Riemann surface and $ds^2$ a pseudo-metric on $S$ which is locally written as $ds^2=\lambda^2|dz|^2$, where $\lambda$ is a nonnegative real-value function with mild singularities and $z$ is a holomorphic local coordinate. The divisor of $ds^2$ is defined by $\nu_{ds}:=\nu_\lambda$ for each local expression $ds^2=\lambda^2|dz|^2$, which is globally well-defined on $S$. 
The Ricci of  $ds^2$ is defined by 
$$ \mathrm{Ric}_{ds^2}=-dd^c\log\lambda^2$$
for each local expression  $ds^2=\lambda^2|dz|^2$. This definition is also globally well-defined on $S$.

We say that $ds^2$ is a continuous pseudo-metric if $\nu_{ds}\ge 0$ everywhere. We say that $ds^2$ have strictly negative curvature on $S$ if there is a positive constant $C$ such that
$$ -\mathrm{Ric}_{ds^2}\ge C\Omega_{ds^2}, $$
where $\Omega_{ds^2}=\lambda^2\cdot\dfrac{\sqrt{-1}}{2}\cdot dz\wedge d\bar z$.

Let $f$ be a linearly nondegenerate holomorphic map of $S$ into $\P^n(\C)$ and $F=(f_0,\ldots,f_n)$ a reduced representation of $f$. 
Consider the holomorphic map
$$F_{p} = (F_{p})_z := F^{(0)} \wedge F^{(1)} \wedge \cdots \wedge F^{(p)} : S\rightarrow \bigwedge_{p+1}\C^{n+1}$$
for $0\le p\le M$, where 
\begin{itemize}
\item $F^{(0)}:=F=(f_0,\ldots,f_n)$,
\item $F^{(l)}=(F^{(l)})_z:=\left ((f_0)^{(l)}_z,\ldots,(f_n)^{(l)}_z\right)$ for each $l=0, 1,\ldots , p$,
\item $(f_i)^{(l)}_z \ (i =0,\ldots, n)$ is the $l^{th}$- derivatives of $f_i$ taken with respect to $z$.
\end{itemize}
The norm of $F_{p}$ is given by
$$|F_{p}|:=\left (\sum_{0\le i_0<i_1<\cdots<i_p\le n}\left |W_z(f_{i_0},\ldots,f_{i_p})\right|^2\right)^{1/2}, $$
where $W_z(f_{i_0},\ldots,f_{i_p}):=\det\left ((f_{i_j})^{(l)}_z\right)_{0\le l,j\le p}$.  

Now, for a hyperplane $H$ in $\P^n(\C)$ defined by a linear form (denoted again by $H$) á follows
$$H=\sum_{i=0}^na_ix_i,$$
where $(x_0:\cdots:x_n)$ is homogeneous coordinates system on $\P^n(\C)$. We define
\begin{align*}
F_{0}(H)&=a_0f_0+\cdots+a_nf_n=H(F),\\ 
|F_{p}(H)|&=\left (\sum_{0\le i_1<\cdots<i_p\le n}\sum_{\ell\ne i_1,\ldots,i_p}\left |a_lW_z(f_\ell,f_{i_1},\ldots,f_{i_p})\right|^2\right)^{1/2}.
\end{align*}
Finally, for $0\le p\le n$, the $p^{th}$-contact function of $f$ for $H$ is defined (not depend on the choice of the local coordinate $z$) by
$$\varphi_{p}(H):=\dfrac{|F_{p}(H)|^2}{|F_{p}|^2}.$$

For each $p\ (0\le p\le n-1)$, we set $n_p=\binom{n+1}{p+1}-1$. Denote by $\pi_p$ the canonical projection from $\bigwedge^{p+1}\C^{n+1}\sim\C^{n_p+1}$ onto $\P^{n_p}(\C)$ and by $\Omega_p$ the pullback of the Fubini-Study form on $\P^{n_p}(\C)$ by the map $\pi\circ F_{p}$, i.e., $\Omega_p = dd^c\log |F_{p}|^2$. 

\begin{theorem}[{see \cite[Theorem 2.5.3]{Fu90}}]\label{thm3.3}
Let $H_1, \cdots, H_q$ be hyperplanes in $\mathbb{P}^n(\mathbb{C})$ in general position. For every $\epsilon>0$ there exist positive numbers $\delta(>1)$ and $C$, depending only on $\epsilon$ and $H_j, 1 \leq j \leq q$, such that
\begin{align*}
&dd^c \log \frac{\Pi_{p=0}^{n-1}\left|F_p\right|^{2 \epsilon}}{\Pi_{1 \leq j \leq q, 0 \leq p \leq n-1} \log ^{2}\left(\delta / \varphi_p\left(H_j\right)\right)}\\
&\geq C\left(\frac{\left|F_0\right|^{2(q-n-1)}\left|F_n\right|^2}{\Pi_{j=1}^q\left(\left|H_j\left(F\right)\right|^2 \Pi_{p=0}^{n-1} \log ^2\left(\delta / \varphi_p\left(H_j\right)\right)\right)}\right)^{\frac{2}{n(n+1)}} dd^c|z|^2
\end{align*}
\end{theorem}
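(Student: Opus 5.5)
This statement is Fujimoto's generalized Schwarz-type curvature estimate, quoted from \cite[Theorem 2.5.3]{Fu90}. My plan is to reproduce Fujimoto's argument in outline rather than invent a new one. The proof has two stages. First, one proves a pointwise inequality for each individual factor. Second, the individual inequalities are combined through the arithmetic--geometric mean inequality.

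The basic ingredients are the classical Plücker-type identities for the derived curves:
$$dd^c\log|F_p|^2=\frac{|F_{p-1}|^2|F_{p+1}|^2}{|F_p|^4}\,dd^c|z|^2 \quad (0\le p\le n-1,\ |F_{-1}|\equiv1),$$
and the corresponding contact-function identity
$$dd^c\log\varphi_p(H)\ \ge\ \ldots$$
in the form given by Fujimoto. From these, the key lemma is the following: for every $\epsilon>0$ there is $\delta>1$ such that, for each $j$ and $p$,
$$dd^c\log\frac{|F_p|^{2\epsilon}}{\log^2(\delta/\varphi_p(H_j))}\ \ge\ \frac{c\,\varphi_{p+1}(H_j)}{\varphi_p(H_j)\log^2(\delta/\varphi_p(H_j))}\,\frac{|F_{p-1}|^2|F_{p+1}|^2}{|F_p|^4}\,dd^c|z|^2.$$
To prove the lemma, I would compute $dd^c\log\log(\delta/\varphi)$ explicitly. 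The negative term coming from the logarithm is absorbed by the $\epsilon$-multiple of $\Omega_p$; this is possible once $\delta$ is chosen large enough, using the bound $\varphi_{p+1}\le C\varphi_p$ (the function $x\log^{-2}(\delta/x)$ is increasing and bounded).

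Next, I sum over $j$ and $p$ and apply the arithmetic--geometric mean inequality to the products. The exponents telescope: the product over $p$ of $|F_{p-1}|^2|F_{p+1}|^2/|F_p|^4$ leaves $|F_n|^2/|F_0|^2\cdot|F_{n-1}|^{-2}\ldots$. Taking the total weights into account, with $\sum_{p=0}^{n-1}(n-p)=n(n+1)/2$, one obtains the exponent $2/(n(n+1))$ and the factor $|F_n|^2$. The contact functions telescope in the same way, leaving $\prod_j\varphi_n(H_j)/\varphi_0(H_j)$, where $\varphi_0(H_j)=|H_j(F)|^2/|F_0|^2$ and $\varphi_n(H_j)$ is constant.

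At this point general position enters, and I expect this to be the main obstacle. We only have $q$ hyperplanes but need $|F_0|^{2(q-n-1)}$. The plan is to use the following fact: for each point, all but at most $n$ of the $\varphi_p(H_j)$ are bounded below by a positive constant. This follows from general position, via Fujimoto's estimate that $\prod_j\varphi_p(H_j)$ over any $n+1$ of the indices is bounded below. Consequently, the missing $q-n-1$ factors $|H_j(F)|^2/|F_0|^2$ can be inserted at the cost of a constant, since each is $\le C$. Carrying this out requires splitting the sum carefully and applying the arithmetic--geometric mean inequality with weights matching the exponents $n-p$. The constants $C$ and $\delta$ then depend only on $\epsilon$ and the $H_j$.
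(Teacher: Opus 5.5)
The paper gives no proof of this statement; it quotes it from Fujimoto. Your skeleton (a lower bound for each pair $(j,p)$, a general-position lemma, AM--GM with weights $n-p$, then telescoping) is the skeleton of Fujimoto's argument. As written, though, the proposal goes wrong at exactly the two places where the work lies.

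\textbf{The key lemma rests on a false inequality.} The bound $\varphi_{p+1}\le C\varphi_p$ is false. For $\|H\|=1$, $\varphi_p(H)$ is the squared norm of the restriction of $H$ to the osculating space $\mathrm{span}(F,\dots,F^{(p)})$. Hence $\varphi_0(H)\le\varphi_1(H)\le\cdots\le\varphi_n(H)=1$, and $\varphi_{p+1}/\varphi_p$ is unbounded; for example $\varphi_0(H)=0<\varphi_1(H)$ at a simple zero of $H(F)$.

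Your lemma is nonetheless true, for a different reason. Set $L=\log(\delta/\varphi_p)$ and $\varphi_{-1}:=0$. The identities $dd^c\log\varphi_p=(\varphi_{p-1}\varphi_{p+1}\varphi_p^{-2}-1)\Omega_p$ and $\frac{\sqrt{-1}}{2\pi}\partial\varphi_p\wedge\bar\partial\varphi_p=(\varphi_p-\varphi_{p-1})(\varphi_{p+1}-\varphi_p)\Omega_p$ show that in $-2dd^c\log L$ the ratio $\varphi_{p+1}/\varphi_p$ occurs only in positive terms. Those terms total at least $2\varphi_{p+1}/(\varphi_pL^2)\,\Omega_p$, while the negative terms are at most $\frac{4}{\log\delta}\Omega_p$. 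This gives $dd^c\log(|F_p|^{2\epsilon}/L^2)\ge(\frac{\epsilon}{2}+\frac{2\varphi_{p+1}}{\varphi_pL^2})\Omega_p$. The residual $\frac{\epsilon}{2}\Omega_p$, which your version throws away, is needed below.

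\textbf{The ``main obstacle'' is a bookkeeping error.} With weights $n-p$, the Plücker factors telescope to
$\prod_{p=0}^{n-1}(|F_{p-1}|^2|F_{p+1}|^2/|F_p|^4)^{n-p}=|F_n|^2/|F_0|^{2(n+1)}$, not to $|F_n|^2$. Combined with $\prod_j\varphi_0(H_j)^{-1}=|F_0|^{2q}/\prod_j|H_j(F)|^2$, this yields exactly $|F_0|^{2(q-n-1)}$. So nothing needs inserting, and inserting factors $|H_j(F)|^2/|F_0|^2$ would change the inequality being proved.

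\textbf{Where general position is really needed.} A single AM--GM over all pairs $(j,p)$ cannot work. Telescoping $\prod_p\varphi_{p+1}(H_j)/\varphi_p(H_j)$ forces weights independent of $p$, while telescoping the Plücker factors forces weights proportional to $n-p$. So for each fixed $p$ you must first prove
\[ \sum_j(\cdots)\ge C_p\Bigl(\prod_jX_{jp}\Bigr)^{1/(n-p)}\Omega_p, \qquad X_{jp}=\frac{\varphi_{p+1}(H_j)}{\varphi_p(H_j)\log^2(\delta/\varphi_p(H_j))}, \]
using $\epsilon/q$ in place of $\epsilon$. This requires the sharper fact that at each point at most $n-p$ indices, not $n$, have $\varphi_p(H_j)<c_0$. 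The reason is that a $(p+1)$-dimensional subspace of $\C^{n+1}$ cannot lie in $n-p+1$ hyperplanes in general position, and compactness of the Grassmannian makes $c_0$ uniform. Your ``product over any $n+1$ indices is bounded below'' is false, since $\varphi_0(H_j)$ vanishes on $f^{-1}(H_j)$; what is bounded below is the maximum over any $n-p+1$ indices.

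The per-level estimate then goes as follows:
\begin{itemize}
\item Indices with $\varphi_p(H_j)\ge c_0$ satisfy $X_{jp}\le 1/(c_0\log^2\delta)$, so dropping them from the product costs only a constant.
\item The at most $n-p$ remaining terms are padded up to $n-p$ terms using the residual $\epsilon$-term, and AM--GM applies.
\end{itemize}
With ``at most $n$'' you would get exponent $1/n$ at every level, and the weights would no longer match the Plücker exponents.
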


\begin{theorem}[{cf. \cite[Proposition 2.5.7]{Fu93}}]\label{thm3.4}
Set $\sigma_p=p(p+1)/2$ for $0\le p\le n+1$ and $\tau_m=\sum_{p=1}^m\sigma_p$. Then, we have
$$ dd^c\log(|F_{0}|^2\cdots |F_{n-1}|^2)\ge\dfrac{\tau_n}{\sigma_n}\left(\dfrac{|F_{0}|^2\cdots |F_{n}|^2}{|F_{0}|^{2\sigma_{n+1}}}\right)^{1/\tau_n}dd^c|z|^2. $$
\end{theorem}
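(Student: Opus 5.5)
The plan is the classical argument of Fujimoto: combine the Plücker-type formula for the derived curves $F_p$ with a weighted arithmetic–geometric mean inequality.

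\emph{Step 1 (local identity).} Set $|F_{-1}|:=1$. For each $0\le p\le n-1$, at points where $F_p\neq 0$, use the standard identity (see \cite{Fu93})
$$dd^c\log|F_p|^2=\frac{|F_{p-1}|^2\,|F_{p+1}|^2}{|F_p|^4}\,dd^c|z|^2 .$$
It follows from the Laplace/Plücker relation $|F_p|^2\,\partial\bar\partial|F_p|^2-|\partial|F_p|^2|^2=|F_{p-1}|^2|F_{p+1}|^2$ applied to $F_p=F^{(0)}\wedge\cdots\wedge F^{(p)}$. On the discrete set where some $F_p$ vanishes, $\log|F_p|^2$ is subharmonic. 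In the sense of currents this set therefore only adds a nonnegative measure to the left-hand side, so it suffices to prove the inequality pointwise off this set.

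Write $\omega_p:=|F_{p-1}|^2|F_{p+1}|^2/|F_p|^4$. Then the left-hand side of the theorem equals $\bigl(\sum_{p=0}^{n-1}\omega_p\bigr)\,dd^c|z|^2$.

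\emph{Step 2 (choice of weights).} Take $c_p:=\sigma_{n-p}$ for $0\le p\le n-1$, and write $L_p=\log|F_p|^2$ with $L_{-1}=0$. A direct check of the coefficients gives
$$\sum_{p=0}^{n-1}c_p\,(L_{p-1}+L_{p+1}-2L_p)=\sum_{q=0}^{n}L_q-\sigma_{n+1}L_0 .$$
The checks are as follows.
\begin{itemize}
\item The coefficient of $L_n$ is $c_{n-1}=\sigma_1=1$.
\item For $1\le q\le n-1$, the coefficient of $L_q$ is the second difference $c_{q-1}+c_{q+1}-2c_q=1$, using $c_n=\sigma_0=0$.
\item The coefficient of $L_0$ is $c_1-2c_0=\sigma_{n-1}-2\sigma_n=-(n^2+3n)/2=1-\sigma_{n+1}$.
\end{itemize}
Consequently
$$\prod_p\omega_p^{c_p}=\frac{|F_0|^2\cdots|F_n|^2}{|F_0|^{2\sigma_{n+1}}}.$$
Moreover $\sum_p c_p=\sum_{m=1}^n\sigma_m=\tau_n$.

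\emph{Step 3 (AM–GM).} Since $0\le c_p\le\sigma_n$, we have
$$\sum_p\omega_p\ \ge\ \frac{1}{\sigma_n}\sum_p c_p\,\omega_p .$$
The weighted arithmetic–geometric mean inequality then gives
$$\frac{1}{\sigma_n}\sum_p c_p\,\omega_p\ \ge\ \frac{\tau_n}{\sigma_n}\prod_p\omega_p^{\,c_p/\tau_n}
=\frac{\tau_n}{\sigma_n}\left(\frac{|F_{0}|^2\cdots |F_{n}|^2}{|F_{0}|^{2\sigma_{n+1}}}\right)^{1/\tau_n}.$$
This is the claimed inequality.

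The only step that is not bookkeeping is Step 1. I expect it to be the main obstacle, since it needs the exterior-algebra identity for derived curves; I would quote it from Fujimoto's book \cite{Fu93} rather than re-derive it. The weight computation in Step 2 is routine once $c_p=\sigma_{n-p}$ has been guessed from the second-difference condition.
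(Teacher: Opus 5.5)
Your proposal is correct: the coefficient check for the weights $c_p=\sigma_{n-p}$ (with $c_n=\sigma_0=0$ at the boundary) is right. Combining $dd^c\log|F_p|^2=\frac{|F_{p-1}|^2|F_{p+1}|^2}{|F_p|^4}dd^c|z|^2$ with $\sigma_{n-p}\le\sigma_n$ and the weighted arithmetic--geometric mean inequality is exactly the standard argument of Fujimoto's Proposition 2.5.7, which the paper quotes without proof. The one point worth making explicit is that the standing assumption that $f$ is linearly nondegenerate is what guarantees $F_p\not\equiv 0$ for $0\le p\le n-1$, so the zero sets you discard are indeed discrete.
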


\begin{theorem}\label{thm3.5}
With the notations and the assumption in Theorem \ref{thm3.3}, we have
$$ \nu_{\phi}+\sum_{j=1}^q\omega_j\cdot\min\{\nu_{H_j(f)},n\}\ge 0 $$
where $\phi=\dfrac{|F_n|}{\prod_{j=1}^q|H_j(F)|^{\omega_j}}$ and $\nu_\phi$ is the divisor of the function $\phi$.
\end{theorem}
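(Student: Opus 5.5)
The plan is a purely local computation: the claimed inequality of divisors holds pointwise, so it suffices to fix an arbitrary point $z_0$ and check it there. In a local coordinate $z$ with $z(z_0)=0$, set $m_j:=\nu_{H_j(f)}(z_0)$. Since $\nu_\phi=\nu_{F_n}-\sum_j\omega_j\nu_{H_j(f)}$, the statement is equivalent to
$$\nu_{F_n}(z_0)\ge\sum_{j=1}^q\omega_j\bigl(m_j-\min\{m_j,n\}\bigr).$$
Here $\nu_{F_n}$ is the order of vanishing of $|F_n|=|W_z(f_0,\ldots,f_n)|$. I take the weights $\omega_j$ to satisfy $0<\omega_j\le 1$, as Nochka-type weights do and as they do when all $\omega_j=1$ in the general position case. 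Every term $m_j-\min\{m_j,n\}$ is nonnegative. It is therefore enough to prove the stronger inequality with all $\omega_j$ replaced by $1$, namely
$$\nu_{F_n}(z_0)\ge\sum_j\max\{m_j-n,0\}.$$

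\emph{Step 1: reduce to at most $n$ hyperplanes.} Because $F$ is a reduced representation, $F(z_0)\neq 0$. By general position, at most $n$ of the $H_j$ satisfy $H_j(F)(z_0)=0$; call them $H_{j_1},\ldots,H_{j_k}$ with $k\le n$. Any $n$ of the hyperplanes have linearly independent defining forms (their intersection is $n$-fold and nonempty only in dimension $\ge 0$ when they are independent; general position gives independence of any $n+1$, hence of any subfamily). So $H_{j_1},\ldots,H_{j_k}$ are linearly independent. Complete them by linear forms $L_{k+1},\ldots,L_n$ to a basis of the dual space. For all other indices $j$ we have $m_j=0$, so those $j$ contribute nothing.

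\emph{Step 2: change of basis in the Wronskian.} The Wronskian is multilinear and alternating in its columns. Hence
$$W_z\bigl(H_{j_1}(F),\ldots,H_{j_k}(F),L_{k+1}(F),\ldots,L_n(F)\bigr)=c\,W_z(f_0,\ldots,f_n)$$
for a nonzero constant $c$, namely the determinant of the change of basis. Thus $\nu_{F_n}(z_0)$ equals the vanishing order at $z_0$ of the left-hand Wronskian.

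\emph{Step 3: order count.} In the column of $H_{j_i}(F)$, the entry of derivative order $l\le n$ vanishes at $z_0$ to order at least $\max\{m_{j_i}-l,0\}\ge\max\{m_{j_i}-n,0\}$. Expanding the determinant, every term is a product taking exactly one entry from each column. Each term therefore vanishes to order at least $\sum_{i=1}^k\max\{m_{j_i}-n,0\}$, and so does their sum. This yields the reduced inequality and hence the theorem.

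The only step needing care is Step 1, because the whole count rests on it. We must ensure that the hyperplanes vanishing at $z_0$ can be placed simultaneously as columns of a single Wronskian, i.e.\ that they are linearly independent. This is exactly what general position supplies. If instead the $\omega_j$ were Nochka weights for subgeneral position, one would additionally invoke the Nochka property $\sum_{j\in R}\omega_j\le \operatorname{rank}(R)$ to pass from the set of vanishing hyperplanes to an independent subset, but in the present general position setting this is unnecessary.
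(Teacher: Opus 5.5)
The paper gives no proof of this statement. It is quoted as the standard Wronskian estimate from Fujimoto's work, and the paper only ever applies it with all $\omega_j=1$, to the $n+2$ hyperplanes $x_0,\ldots,x_n,\sum_j a_jx_j$. Your argument is the standard proof of that estimate and it is correct: locate the at most $n$ hyperplanes vanishing at $z_0$, change basis in the Wronskian, and count vanishing orders column by column.

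Two small repairs are needed:
\begin{itemize}
\item In Steps 1--2 the completed basis must consist of $n+1$ linear forms, e.g.\ $H_{j_1},\ldots,H_{j_k},L_{k+1},\ldots,L_{n+1}$, to match the $n+1$ entries of $W_z(f_0,\ldots,f_n)$.
\item The independence of the vanishing forms follows simply because, for $q\ge n+1$, general position means that any $n+1$ of the defining forms are linearly independent. You can drop the garbled parenthetical.
\end{itemize}

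Your restriction $\omega_j\le 1$ is not just a convenience; it is necessary, since the statement leaves the weights unspecified. Take $n=1$, $F=(1,z^5)$ and $H=x_1$. Then $\nu_{F_1}(0)=4$, so $\nu_\phi(0)+\omega\min\{5,1\}=4-4\omega$, which is negative for $\omega>1$. So the result should be read with $\omega_j\le 1$ (for instance Nochka-type weights, or $\omega_j=1$ as in the paper's applications), which is exactly what you assumed.
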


\begin{lemma}[{Generalized Schwarz's Lemma \cite{A38}}]\label{lem3.6}  
Let $v$ be a non-negative real-valued continuous subharmonic function on $\Delta (R)=\{z\in\C\ |\ \|z\|<R\}$. If $v$ satisfies the inequality $\Delta\log v\ge v^2$ in the sense of distribution, then
$$v(z) \le \dfrac{2R}{R^2-|z|^2}.$$
\end{lemma}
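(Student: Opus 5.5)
The approach is Ahlfors' comparison argument: compare $v$ with the explicit hyperbolic-type density of a slightly smaller disk, which satisfies the reverse inequality with equality and blows up at the boundary. Throughout, $\Delta=4\,\partial^2/\partial z\partial\bar z$ is the usual Laplacian.

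\textbf{Comparison function.} Fix $0<r<R$ and set
$$w_r(z)=\frac{2r}{r^2-|z|^2}\qquad(|z|<r).$$
A direct computation gives $\partial\bar\partial\log(r^2-|z|^2)=-r^2/(r^2-|z|^2)^2$. Hence $\Delta\log w_r=4r^2/(r^2-|z|^2)^2=w_r^2$, so $w_r$ satisfies the hypothesis of the lemma with equality. Moreover $w_r>0$ on $\Delta(r)$ and $w_r(z)\to+\infty$ as $|z|\to r$. Since $v$ is continuous on $\overline{\Delta(r)}\subset\Delta(R)$, it is bounded there.

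\textbf{Maximum principle.} Let $U=\{z\in\Delta(r)\ :\ v(z)>w_r(z)\}$, which is open because $v$ and $w_r$ are continuous. Suppose $U\neq\varnothing$. On $U$ we have $v>0$, so $u:=\log v-\log w_r$ is a continuous, positive, real-valued function there. In the sense of distributions on $U$,
$$\Delta u=\Delta\log v-\Delta\log w_r\ \ge\ v^2-w_r^2\ >\ 0 .$$
A continuous function with nonnegative distributional Laplacian is subharmonic, so $u$ is subharmonic on $U$.

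Since $v$ is bounded and $w_r\to\infty$ at $|z|=r$, the closure of $U$ does not meet the circle $|z|=r$. Hence $\overline U$ is a compact subset of $\Delta(r)$, and $u=0$ on $\partial U$. Now $u$ extends continuously to $\overline U$ with $u>0$ inside, so it attains a positive maximum at an interior point of some component of $U$. By the maximum principle $u$ is then constant on that component. Being constant and equal to this positive value, and continuous up to the boundary, it would equal the same positive value on the boundary of the component, contradicting $u=0$ there.

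Alternatively, and without invoking constancy, one can argue at a maximum point: there $u$ is a local maximum, while $\Delta u\ge v^2-w_r^2>0$ holds in a neighbourhood. For a continuous function, a strictly positive distributional Laplacian near a point forbids a local maximum at that point, since the sub-mean-value inequality holds strictly there.

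\textbf{Conclusion and main obstacle.} Therefore $U=\varnothing$, i.e. $v(z)\le 2r/(r^2-|z|^2)$ for all $|z|<r$. For fixed $z$, letting $r\uparrow R$ gives $v(z)\le 2R/(R^2-|z|^2)$. The main point to handle carefully is working only on $U$. There $\log v$ is finite, so the distributional inequality can legitimately be combined with the smooth identity for $w_r$. The zeros of $v$, where $\log v=-\infty$, never enter the argument.
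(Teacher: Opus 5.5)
Your proof is correct. The paper gives no proof of this lemma and only cites Ahlfors; your argument is exactly the classical Ahlfors comparison proof behind that citation. You compare with $w_r=2r/(r^2-|z|^2)$ on the open set $\{v>w_r\}$, where $\Delta(\log v-\log w_r)\ge v^2-w_r^2>0$, apply the maximum principle, and let $r\uparrow R$. Working only on $\{v>w_r\}$ correctly avoids the zeros of $v$, and in fact only the continuity of $v$ is needed, not its subharmonicity.
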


We prove the following lemma on the construction of a metric with strictly negative curvature from the derive curve of a holomorphic curve and a Fermat hypersurface.
\begin{lemma}\label{lem3.7} 
Let $Q_0,\ldots,Q_n$ be hypersurfaces of $\P^n(\C)$ in general position of the same degree $\ell$ and $Q$ a hypersurface of $\P^n(\C)$ defined by
$$Q=\sum_{j=0}^na_{j}Q_j^d,$$
where $a_i$'s are nonzero constants. Let $g:\Delta (R)\rightarrow \P^n(\C)$ be a holomorphic map with a reduced representation $(g_0,\ldots,g_n)$ such that the map $f:\Delta (R)\rightarrow \P^n(\C)$, which has the reduced representation $F=(Q_0^d(g),\ldots,Q_n^d(g))$, is linearly nondegenerate. Let $H_j$ is the linear form given by $H_j(x_0,\ldots,x_n)=x_j\ (0\le j\le n)$ and $H_{n+1}$ is the linear form given by $H_{n+1}(x_0,\ldots,x_n)=\sum_{j=0}^na_jx_j$. Assume that there is a positive real number $\theta\in (0,1)$, a divisor $\nu$, a continuous real-valued bounded function $k$ with mild singularities on $S$, a positive constant $c$ such that $\nu(z)>c$ for each $z\in\supp \nu$ and
$$ [\min\{n,\nu_{H_{n+1}(F)}\}]+[\nu]=d\ell\theta \Omega_g+dd^c[\log|k|^2],$$
where $\Omega_g$ is the pull-back of the Fubini Study form $\Omega_n$ on $\P^n(\C)$ by $g$. Let $h=|k|\cdot \|F\|^\theta$. Then for an arbitrarily given $\epsilon$ satisfying 
$$\gamma=1-\theta-\frac{n(n+1)}{d}>\epsilon\left(\sigma_{n+1}+\theta\right),$$ 
the pseudo-metric $d\tau^2=\eta^2|dz|^2$, where
$$ \eta=\left(\dfrac{|F_{0}|^{\gamma-\epsilon\left(\sigma_{n+1}+\theta\right)}h^{1+\epsilon}|F_n|\prod_{p=0}^{n}|F_{p}|^{\epsilon}}{|H_{n+1}(F)|\cdot\prod_{j=0}^{n}|H_j(F)|^{1-\frac{n}{d}}\cdot\prod_{j=0}^{n+1}\prod_{p=0}^{n-1}\log (\delta/\varphi_{p}(H_j))}\right )^{\frac{1}{\sigma_n+\epsilon\tau_n}}$$
and $\delta$ is the number satisfying the conclusion of Theorem \ref{thm3.3}, is continuous and has strictly negative curvature.
\end{lemma}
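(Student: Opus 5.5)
Continuity and strictly negative curvature are handled separately, following Fujimoto's scheme in \cite{Fu90,Fu91}. We work with the linearly nondegenerate curve $f$, whose reduced representation is $F=(Q_0^d(g),\ldots,Q_n^d(g))$, and with the $n+2$ hyperplanes $H_0,\ldots,H_{n+1}$. These are in general position in $\P^n(\C)$ because all $a_j\neq 0$. Since $\{Q_j\}$ are in general position of degree $\ell$, there are constants with $c_1\|G\|^{\ell d}\le\|F\|\le c_2\|G\|^{\ell d}$, so $dd^c\log\|F\|^2=\ell d\,\Omega_g$ as currents, where $G=(g_0,\ldots,g_n)$ denotes the given reduced representation of $g$. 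The hypothesis therefore reads
\[
dd^c\log h^2=[\min\{n,\nu_{H_{n+1}(F)}\}]+[\nu],
\]
with $h=|k|\,\|F\|^\theta$.

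\textbf{Continuity.} It suffices to show that $\nu_\eta\ge 0$ at every point $z_0$. Away from zeros of the $H_j(F)$ this is clear: $|F_n|$, $|F_p|$ and $h^{1+\epsilon}$ have nonnegative order there, and the $\log(\delta/\varphi_p)$ factors contribute nothing to the divisor. At a zero of $H_j(F)$ with $j\le n$ we have $H_j(F)=Q_j(g)^d$, so $\nu_{H_j(F)}=d\,\nu_{Q_j(g)}$ is either $0$ or at least $d$. Hence $\min\{n,\nu_{H_j(F)}\}\ge\frac{n}{d}\nu_{H_j(F)}$ always, and we may write
\[
\Bigl(1-\frac nd\Bigr)\nu_{H_j(F)}\le \nu_{H_j(F)}-\min\{n,\nu_{H_j(F)}\}.
\]
By Theorem~\ref{thm3.5} applied with $\omega_j=1$,
\[
\nu_{F_n}\ge\sum_{j=0}^{n+1}\bigl(\nu_{H_j(F)}-\min\{n,\nu_{H_j(F)}\}\bigr),
\]
which controls all the $H_j(F)$ with $j\le n$. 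For $H_{n+1}(F)$, the factor $h$ supplies exactly the missing $\min\{n,\nu_{H_{n+1}(F)}\}$ through the current equation; the extra power $\epsilon$ is harmless because $\nu\ge 0$. The exponent of $|F_0|$ is positive by the choice of $\epsilon$, so $\nu_\eta\ge 0$ everywhere and $\eta$ is continuous.

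\textbf{Curvature.} We compute $dd^c\log\eta$ off the singular set. By the current equation, $dd^c\log h=0$ off $\supp\nu\cup\{H_{n+1}(F)=0\}$, while $dd^c\log|F_n|$ and $dd^c\log|H_j(F)|$ vanish off zeros. Thus
\[
(\sigma_n+\epsilon\tau_n)\,dd^c\log\eta
\ge \bigl(\gamma-\epsilon(\sigma_{n+1}+\theta)\bigr)dd^c\log|F_0|
+\frac{\epsilon}{2}\,dd^c\log\prod_p|F_p|^2
-dd^c\log\prod_{j,p}\log(\delta/\varphi_p(H_j)).
\]
The first term is $\ge 0$. We split $\epsilon/2$ into two halves. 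One half, together with the $\log$-term, is fed into Theorem~\ref{thm3.3} with $q=n+2$. This gives a lower bound
\[
C\Bigl(\frac{|F_0|^{2}|F_n|^2}{\prod_j|H_j(F)|^2\prod\log^2}\Bigr)^{\frac{1}{\sigma_n}}dd^c|z|^2 .
\]
The other half goes into Theorem~\ref{thm3.4}, which gives a lower bound
\[
C'\Bigl(\frac{\prod_{p}|F_p|^2}{|F_0|^{2\sigma_{n+1}}}\Bigr)^{1/\tau_n}dd^c|z|^2 .
\]
Next we combine the two bounds by the weighted arithmetic--geometric mean inequality with weights $\sigma_n/(\sigma_n+\epsilon\tau_n)$ and $\epsilon\tau_n/(\sigma_n+\epsilon\tau_n)$. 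This should produce
\[
dd^c\log\eta\ge C''\,\frac{\eta^2}{\text{(bounded factor)}}\,dd^c|z|^2 .
\]
The bounded factor collects the pieces that do not appear in $\eta$: powers of $h$, ratios $|H_j(F)|/\|F\|\le C$, and factors $\varphi_p^{\cdot}\log$ which are bounded. These are absorbed using $|F_0|=\|F\|$, the boundedness of $k$, and $|H_j(F)|^{n/d}\le C\|F\|^{n/d}$. Together these yield $-\mathrm{Ric}_{d\tau^2}\ge C\,\Omega_{d\tau^2}$.

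\textbf{Main obstacle.} The hardest step is the exponent bookkeeping in this last combination. After the AM--GM step, the powers of $\|F\|$ must match the exponent $\gamma-\epsilon(\sigma_{n+1}+\theta)$ in $\eta$, and the leftover power of $\|F\|$ must be $\le 0$ so that it can be bounded. This is exactly where the condition
\[
\gamma=1-\theta-\frac{n(n+1)}{d}>\epsilon(\sigma_{n+1}+\theta)
\]
enters. The factor $\prod_{j\le n}|H_j(F)|^{n/d}$, which is removed from the denominator, costs $\|F\|^{(n+1)n/d}$, and $h$ costs $\|F\|^{\theta(1+\epsilon)}$. I would first verify this by tracking the total homogeneity degree in $F$: the quantity inside the bound should be degree-$0$ invariant up to the bounded factors.
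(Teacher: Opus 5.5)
Your proposal follows the paper's proof step for step. Continuity comes from Theorem~\ref{thm3.5} with $\omega_j=1$ together with $\nu_{H_j(F)}=d\,\nu_{Q_j(g)}$ for $j\le n$. Curvature comes from Theorem~\ref{thm3.3} with $q=n+2$, Theorem~\ref{thm3.4}, and a H\"older (weighted AM--GM) combination with exactly your weights. Three corrections are needed.

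\textbf{The inequality you state is reversed.} It should read $\min\{n,\nu_{H_j(F)}\}\le\frac nd\,\nu_{H_j(F)}$; as you wrote it, it fails whenever $\nu_{Q_j(g)}\ge 2$. Your next display is the correct consequence of the right version.

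\textbf{The bookkeeping you defer closes with leftover exponent exactly zero, not merely $\le 0$.} Indeed $1-\epsilon\sigma_{n+1}=\bigl(\gamma-\epsilon(\sigma_{n+1}+\theta)\bigr)+(1+\epsilon)\theta+\frac{n(n+1)}{d}$. So $h\le C\|F\|^\theta$ and $|H_j(F)|\le\|F\|=|F_0|$ give $dd^c\log\eta^2\ge C\eta^2\,dd^c|z|^2$ at once. The hypothesis $\gamma>\epsilon(\sigma_{n+1}+\theta)$ is not used in this matching; it is used only to discard the $dd^c\log|F_0|$ term.

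\textbf{The current identity does not follow from the two-sided bound.} The bound $c_1\|G\|^{\ell d}\le\|F\|\le c_2\|G\|^{\ell d}$ does not give $dd^c\log\|F\|^2=\ell d\,\Omega_g$. The two sides differ by $dd^c$ of $\log(\|F\|^2/\|G\|^{2\ell d})$, which is bounded but in general not harmonic. Hence your claim that $dd^c\log h=0$ off the divisors is unjustified for $h=|k|\,\|F\|^\theta$. The paper makes the same tacit identification. Both arguments go through verbatim if $h$ is replaced by $|k|\,\|G\|^{d\ell\theta}$. This choice satisfies the current equation exactly, still obeys $h\le C\|F\|^\theta$, and changes $\eta$ only by a factor bounded above and below.
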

\begin{proof}
Without loss of generality, assume that $k\le 1$, and hence $h\le \|F\|^\theta$. It is obvious that the function $\eta$ is continuous at every point $z$ with $\prod_{j=0}^nH_j(F)(z)\ne 0$. Now, for $z_0\in S$ with $\prod_{j=0}^nH_j(F)(z_0)= 0$, we have
\begin{align}\label{new1}
\nu_{h}(z_0)\ge \min\{n,\nu_{H_{n+1}(F)}(z_0)\}.
\end{align}
Also, by Theorem \ref{thm3.5}, one has
\begin{align}\label{new4}
\begin{split}
\nu_{F_n}(z_0)&-\nu_{H_{n+1}(F)}(z_0)-(1-\frac{n}{d})\sum_{j=0}^n\nu_{H_j(F)}(z_0)\\
&\ge -\sum_{j=0}^{n+1}\min\{n,\nu_{H_j(F)}(z_0)\}+\frac{n}{d}\sum_{j=0}^n\nu_{H_j(F)}(z_0)\\
&\ge  -\min\{n,\nu_{H_{n+1}(F)}(z_0)\}.
\end{split}
\end{align}
Combining the inequalities (\ref{new1}), (\ref{new4}) and the definition of $\eta$, we have
\begin{align*}
\nu_{\eta}(z_0)&\ge \frac{1}{\sigma_n+\epsilon\tau_n}\biggl (\nu_{F_n}(z_0)+(1+\epsilon)\min\{\nu_{H_{n+1}(F)}(z_0),n\}\\
&-\nu_{H_{n+1}(F)}(z_0)-(1-\frac{n}{d})\sum_{j=0}^n\nu_{H_j(F)}(z_0)\biggl)\\
&\ge\frac{\epsilon}{\sigma_n+\epsilon\tau_n}\min\{n,\nu_{H_{n+1}(F)}(z_0)\}\ge 0.
\end{align*}
Then $d\tau^2$ is continuous pseudo-metric on $\Delta(R)$. 

We will show that $d\tau^2$ has strictly negative curvature on $\Delta$. By the inequality (\ref{new4}) and the definition of  $h$, we have
\begin{align*}
dd^c&\log\dfrac{|F_n|}{|H_{n+1}(F)|\cdot\prod_{j=0}^{n}|H_j(F)|^{1-\frac{n}{d}}}+(1+\epsilon)dd^c\log h\\
&\ge -\frac{1}{2}[\min\{n,\nu_{H_{n+1}(F)}\}]+\frac{1}{2}(1+\epsilon)\left([\min\{n,\nu_{H_{n+1}(F)}\}]+[\nu]\right)\ge 0.
\end{align*}
Then by Theorems \ref{thm3.3}, \ref{thm3.4}, with the fact that $dd^c\log|F_n|=0$ (because $F_n$ is a holomorphic function) and the definition of $\eta$, we have
\begin{align}\label{new3}
\begin{split}
&dd^c\log\eta^2\ge\dfrac{\gamma-\epsilon\left(\sigma_{n+1}+\theta\right)}{\sigma_n+\epsilon\tau_n}dd^c\log|F_0|^2+\dfrac{\epsilon}{2(\sigma_n+\epsilon\tau_n)}dd^c\log\left(|F_{0}|^2\cdots|F_{n-1}|^2\right)\\
&\quad\quad\quad\quad +\dfrac{1}{\sigma_n+\epsilon\tau_n}dd^c\log\dfrac{\prod_{p=0}^{n-1}|F_{p}|^{2(\frac{\epsilon}{2})}}{\prod_{j=0}^{n+1}\prod_{p=0}^{n-1}\log^{2}(\delta/\varphi_{p}(H_j))}\\
&\ge\dfrac{\epsilon\tau_n}{2\sigma_n(\sigma_n+\epsilon\tau_n)}\left(\dfrac{|F_0|^2\cdots |F_n|^2}{|F_0|^{2\sigma_{n+1}}}\right)^{\frac{1}{\tau_n}}dd^c|z|^2\\
&+C_0\left (\dfrac{|F_0|^{2}|F_n|^2}{\prod_{j=0}^{n+1}(|H_j(F)|^2\prod_{p=0}^{n-1}\log^2(\delta/\varphi_{p}(H_j)))}\right)^{\frac{1}{\sigma_n}}dd^c|z|^2\\
&\ge\min \{\dfrac{1}{2\sigma_n(\sigma_n+\epsilon\tau_n},\dfrac{C_0}{\sigma_n}\}\epsilon\tau_n\left(\dfrac{|F_0|^2\cdots |F_n|^2}{|F_0|^{2\sigma_{n+1}}}\right)^\frac{1}{\tau_n}dd^c|z|^2\\
&+\min \{\dfrac{1}{2\sigma_n(\sigma_n+\epsilon\tau_n},\dfrac{C_0}{\sigma_n}\}\sigma_n\left (\dfrac{|F_0|^{2}|F_n|^2}{\prod_{j=0}^{n+1}(|H_j(F)|^2\prod_{p=0}^{n-1}\log^2(\delta/\varphi_{p}(H_j)))}\right)^{\frac{1}{\sigma_n}}dd^c|z|^2\\
&\ge C_1\left (\dfrac{|F_{0}|^{1-\epsilon\sigma_{n+1}}|F_n|\prod_{p=0}^n|F_{p}|^\epsilon}{\prod_{j=0}^{n+1}(|H_j(F)|\prod_{p=0}^{n-1}\log(\delta/\varphi_{p}(H_j)))}\right)^{\frac{2}{\sigma_n+\epsilon\tau_n}}dd^c|z|^2
\end{split}
\end{align}
for some positive constants $C_0,C_1$, where the last inequality comes from the H\"{o}lder's inequality.

On the other hand, we have $|h|\le \|F\|^{\theta}\ |H_j(F)|\le \|F_0\|,(0\le j\le n)$, and hence
\begin{align*}
|F_{0}|^{1-\epsilon\sigma_{n+1}}&=|F_{0}|^{\gamma-\epsilon\sigma_{n+1}+\theta+\frac{n(n+1)}{d}}\\
&=|F_{0}|^{\gamma-\epsilon(\sigma_{n+1}+\theta+\frac{n(n+1)}{d})+(1+\epsilon)\theta+(1+\epsilon)\frac{n(n+1)}{d}}\\
&\ge |F_{0}|^{\gamma-\epsilon(\sigma_{n+1}+\theta+\frac{n(n+1)}{d})}|h|^{1+\epsilon}\cdot\prod_{j=0}^n|H_j(F)|^{\frac{n}{d}}.
\end{align*}
This yields that $\Delta \log\eta^2\ge C_2\eta^2$ for a positive constant $C_2$, and hence $d\tau^2$ has strictly negative curvature.
\end{proof}

Applying Lemma \ref{lem3.7}, we will prove the following main lemma of this paper.
\begin{lemma}\label{ML}
Let the notation be as in Lemma \ref{lem3.7}. Then for every $\epsilon >0$ satisfying 
$$\gamma=1-\theta-\frac{n(n+1)}{d}>\epsilon\left(\sigma_{n+1}+\theta\right),$$ 
there exists a positive constant $C$, depending only on $ Q_j (0\le j\le n)$ and $\theta$, such that
\begin{align*}
\dfrac{|F_{0}|^{\gamma-\epsilon(\sigma_{n+1}+\theta)}h^{1+\epsilon}|F_n^{1+\epsilon}|\prod_{j=0}^{n+1}\prod_{p=0}^{n-1}|F_{p}(H_j)|^{\epsilon/{n+2}}}{|H_{n+1}(F)|.\prod_{j=0}^{n}|H_j(F)|^{1-\frac{n}{d}}}\le C\left(\dfrac{2R}{R^2-|z|^2}\right)^{\sigma_n+\epsilon\tau_n}.
\end{align*}
\end{lemma}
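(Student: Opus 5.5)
The plan is to combine Lemma \ref{lem3.7} with the Generalized Schwarz Lemma \ref{lem3.6}, following Fujimoto's standard argument. Fix $\epsilon$ as in the statement and let $d\tau^2=\eta^2|dz|^2$ be the pseudo-metric of Lemma \ref{lem3.7}. That lemma gives two facts: $\eta$ is continuous, and $\Delta\log\eta^2\ge C_2\eta^2$ on $\Delta(R)$ in the sense of distributions, for a constant $C_2>0$ depending only on the $Q_j$, $\theta$ and $\epsilon$.

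\textbf{Step 1.} Set $v=\sqrt{C_2/4}\,\eta$, normalized so that $\Delta\log v\ge v^2$. Since $\eta$ is continuous, nonnegative, and $\log\eta$ is subharmonic, $v$ is a nonnegative continuous subharmonic function. Lemma \ref{lem3.6} then gives
$$\eta\le C_3\,\frac{2R}{R^2-|z|^2}.$$
Raising to the power $\sigma_n+\epsilon\tau_n$ yields
$$\frac{|F_{0}|^{\gamma-\epsilon(\sigma_{n+1}+\theta)}h^{1+\epsilon}|F_n|\prod_{p=0}^{n}|F_{p}|^{\epsilon}}{|H_{n+1}(F)|\prod_{j=0}^{n}|H_j(F)|^{1-\frac{n}{d}}\prod_{j=0}^{n+1}\prod_{p=0}^{n-1}\log(\delta/\varphi_p(H_j))}\le C_4\left(\frac{2R}{R^2-|z|^2}\right)^{\sigma_n+\epsilon\tau_n}.$$

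\textbf{Step 2.} Convert the log factors and $\prod_p|F_p|^\epsilon$ into the quantities in the statement. Since $\varphi_p(H_j)\le 1$ (after normalizing $\|H_j\|=1$) and $\delta>1$, the function $x\mapsto x^{\epsilon/(n+2)}\log(\delta/x)$ is bounded on $(0,1]$. Hence
$$\varphi_p(H_j)^{\epsilon/(2(n+2))}\le C_5\log(\delta/\varphi_p(H_j)),$$
with the exponent chosen to match after taking square roots. This gives
$$\prod_{j=0}^{n+1}\prod_{p=0}^{n-1}\frac{|F_p(H_j)|^{\epsilon/(n+2)}}{|F_p|^{\epsilon/(n+2)}}\le C_6\prod_{j,p}\log(\delta/\varphi_p(H_j)).$$
Multiplying over the $n+2$ indices $j$ produces $\prod_{p=0}^{n-1}|F_p|^{\epsilon}$ in the denominator, which cancels against the same factor in $\prod_{p=0}^n|F_p|^\epsilon$ on the left. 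What remains of the latter is $|F_n|^\epsilon$, which combines with $|F_n|$ into $|F_n|^{1+\epsilon}$. Substituting yields exactly the asserted inequality.

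\textbf{Main obstacle.} The delicate point is the bookkeeping of exponents in Step 2. One has to make sure that the powers of $|F_p|$ for $p\le n-1$ cancel exactly. One also has to check that the boundedness of $x^a\log(\delta/x)$ is applied with the right exponent $a=\epsilon/(2(n+2))$, since $\varphi_p=|F_p(H)|^2/|F_p|^2$. A secondary check is that the constants depend only on the $Q_j$, $\theta$ (and $\epsilon$), not on $R$. This holds because $C_2$ from Lemma \ref{lem3.7} and $\delta$ from Theorem \ref{thm3.3} depend only on the hyperplanes $H_j$, and these are determined by the $a_j$.
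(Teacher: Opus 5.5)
\textbf{Gap in Step 2.} Your overall route is the paper's. You apply the generalized Schwarz Lemma~\ref{lem3.6} to the metric $\eta$ of Lemma~\ref{lem3.7}. Then you trade $\prod_{p=0}^{n-1}|F_p|^{\epsilon}$ for $\prod_{j,p}|F_p(H_j)|^{\epsilon/(n+2)}$ and absorb the resulting powers of $\varphi_p(H_j)$ into the log factors. Step 1 is fine.

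The problem is that the inequality you actually write down in Step 2 points the wrong way. The bound $\varphi_p(H_j)^{\epsilon/(2(n+2))}\le C_5\log(\delta/\varphi_p(H_j))$ is true, but only trivially, since $\varphi_p(H_j)\le 1<\delta$. It is not what the boundedness of $x^{a}\log(\delta/x)$ gives, and it is useless here. It puts $\prod_{j,p}\log(\delta/\varphi_p(H_j))$ in the \emph{numerator} of your upper bound for the target quantity, whereas in Step 1 the same product sits in the \emph{denominator}. Chaining the two only bounds the left side of the lemma by $C\bigl(2R/(R^2-|z|^2)\bigr)^{\sigma_n+\epsilon\tau_n}\prod_{j,p}\log^2(\delta/\varphi_p(H_j))$. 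That extra factor is unbounded near zeros of $\varphi_p(H_j)$, for instance near zeros of $H_j(F)$ when $p=0$. So your claim that substituting ``yields exactly the asserted inequality'' fails as written.

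\textbf{The fix.} Use what the boundedness really says, namely $\varphi_p(H_j)^{a}\log(\delta/\varphi_p(H_j))\le C_5$ with $a=\epsilon/(2(n+2))$. Since $|F_p(H_j)|^{\epsilon/(n+2)}=|F_p|^{\epsilon/(n+2)}\varphi_p(H_j)^{a}$, this yields
\[
\prod_{j=0}^{n+1}\prod_{p=0}^{n-1}|F_p(H_j)|^{\epsilon/(n+2)}\le C_5^{n(n+2)}\,\frac{\prod_{p=0}^{n-1}|F_p|^{\epsilon}}{\prod_{j=0}^{n+1}\prod_{p=0}^{n-1}\log(\delta/\varphi_p(H_j))}.
\]
Now multiply both sides by $|F_0|^{\gamma-\epsilon(\sigma_{n+1}+\theta)}h^{1+\epsilon}|F_n|^{1+\epsilon}\big/\bigl(|H_{n+1}(F)|\prod_{j=0}^n|H_j(F)|^{1-\frac{n}{d}}\bigr)$. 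The left side of the lemma then becomes at most $C_5^{n(n+2)}$ times the left side of your Step 1 inequality. This finishes the proof exactly as in the paper, which uses the boundedness of $x^{\epsilon/(n+2)}\log(\delta/x^2)$ in precisely this direction.
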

\begin{proof}
Similarly as in the proof of Lemma \ref{lem3.7}, we have
$$dd^c\log\eta\le C_2\eta^2dd^c|z|^2.$$
According to Lemma \ref{lem3.6}, one has $ \eta\le C_3\dfrac{2R}{R^2-|z|^2}$ for a positive constant $C_3$, and hence
$$ \left( \dfrac{|F_{0}|^{\gamma-\epsilon(\sigma_{n+1}+\theta)}h^{1+\epsilon}|F_n|\prod_{p=0}^{n}|F_{p}|^{\epsilon}}{|H_{n+1}(F)|\cdot\prod_{j=0}^{n}|H_j(F)|^{1-\frac{n}{d}}\cdot\prod_{j=0}^{n+1}\prod_{p=0}^{n-1}\log (\delta/\varphi_{p}(H_j)))}\right )^{\frac{1}{\sigma_n+\epsilon\tau_n}}\le C_3\dfrac{2R}{R^2-|z|^2}.$$
It follows that
$$ \left( \dfrac{|F_{0}|^{\gamma-\epsilon(\sigma_{n+1}+\theta)}h^{1+\epsilon}|F_n|^{1+\epsilon}\prod_{j=0}^{n+1}\prod_{p=0}^{n-1}|F_{p}(H_j)|^{\epsilon/(n+2)}}{|H_{n+1}(F)|\cdot\prod_{j=0}^{n}|H_j(F)|^{1-\frac{n}{d}}\cdot\prod_{j=0}^{n+1}\prod_{p=0}^{n-1}(\varphi_{p}(H_j))^{\epsilon/2(n+2)}(\log (\delta/\varphi_{p}(H_j)))}\right )^{\frac{1}{\sigma_n+\epsilon\tau_n}}$$
$$\le C_3\dfrac{2R}{R^2-|z|^2}.$$
Since the function $x^{\frac{\epsilon}{n+2}}\log\left (\dfrac{\delta}{x^2}\right)\ (0<x\le 1)$ is bounded, we have
$$\left( \dfrac{|F_{0}|^{\gamma-\epsilon(\sigma_{n+1}+\theta)}h^{1+\epsilon}|F_n|^{1+\epsilon}\prod_{j=0}^{n+1}\prod_{p=0}^{n-1}|F_{p}(H_j)|^{\epsilon/(n+2)}}{|H_{n+1}(F)|\cdot\prod_{j=0}^{n}|H_j(F)|^{1-\frac{n}{d}}}\right )^{\frac{1}{\sigma_n+\epsilon\tau_n}}\le C_4\dfrac{2R}{R^2-|z|^2},$$
for a positive constant $C_4$. The lemma is proved.
\end{proof}

\section{Modified defect relation for Gauss map with hypersurfaces}

In this section, we will prove the main theorems of the paper. Firstly, we need the following essential lemma.

\begin{lemma}[{cf. \cite[Lemma 1.6.7]{Fu93}}]\label{lem4.1}
Let $d\sigma^2$ be a conformal flat metric on an open Riemann surface $S$. Then for every point $p\in S$, there is a holomorphic and locally biholomorphic map $\Phi$ of a disk $\Delta (R_0):=\{w:|w| <R_0\}\ (0 <R_0\le\infty)$ onto an open neighborhood of $p$ with $\Phi(0)=p$ such that $\Phi$ is a local isometry, namely the pull-back $\Phi^*(d\sigma^2)$ is equal to the standard (flat) metric on $\Delta(R_0)$, and for some point $a_0$ with $|a_0|=1$, the curve $\Phi (\overline{0,R_0a_0})$ is divergent in $S$ (i.e. for any compact set $K\subset S$, there exists an $s_0<R_0$ such that $\Phi (\overline{0,s_0a_0})$ does not intersect $K$).
\end{lemma}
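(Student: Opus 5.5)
This lemma is the standard construction of a maximal isometric disk, and I would follow Fujimoto's argument. First, since $d\sigma^2$ is conformal and flat, around each point of $S$ there is a holomorphic local coordinate $w$ in which $d\sigma^2=|dw|^2$. Such flat coordinates are unique up to $w\mapsto aw+b$ with $|a|=1$. Fix $p$ and such a coordinate $w$ with $w(p)=0$. Its inverse $\Phi_0$ is a local isometry from a small disk $\Delta(\varepsilon)$ onto a neighborhood of $p$, with $\Phi_0(0)=p$.

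Next, I would continue $\Phi_0$ analytically. Let $R_0$ be the supremum of those $R\le\infty$ for which there is a holomorphic, locally biholomorphic map $\Phi:\Delta(R)\to S$ extending $\Phi_0$ and satisfying $\Phi^*d\sigma^2=|dw|^2$. Any two such extensions agree on their common disk by the identity theorem, so they glue. Because $\Delta(R_0)$ is simply connected, continuation along rays from $0$ defines $\Phi$ on all of $\Delta(R_0)$. The local isometry condition pins down each continuation step: near any image point, $\Phi$ is the inverse of a flat coordinate, normalized so that it matches the previous step on the overlap. Hence $\Phi$ is holomorphic, a local isometry, and defined on $\Delta(R_0)$. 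If $R_0=\infty$, every ray is divergent, because a ray of infinite $d\sigma$-length cannot stay in a compact set. Strictly speaking this needs completeness to fail, or is only relevant in the application; I would handle this case through the argument below, since it is the $R_0<\infty$ case that matters.

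The main step is to produce a divergent radius when $R_0<\infty$. I would argue by contradiction. Suppose that for every $a$ with $|a|=1$ the curve $\Phi(\overline{0,R_0a})$ is not divergent. Then for each $a$ there is a compact set $K_a$ and a sequence $s_k\to R_0$ with $\Phi(s_ka)\in K_a$. Since $\Phi$ is distance-nonincreasing from the flat metric, these points accumulate at some $q_a\in S$. Around $q_a$ take a flat coordinate disk of radius $2r_a$. Because $\Phi$ is an isometry along the segment, once $\Phi(s_ka)$ is within $r_a$ of $q_a$ and $R_0-s_k<r_a$, the inverse of that flat chart, suitably rotated and translated to match $\Phi$ near $s_ka$, extends $\Phi$ to the disk of radius $r_a$ centered at $s_ka$. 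This disk contains a neighborhood of the boundary point $R_0a$. I expect the delicate part to be checking that this chart really matches $\Phi$ on the overlap. It does, because both are local isometries agreeing on an open set, so the identity theorem applies.

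Finally, by compactness of the circle $|a|=1$, finitely many such neighborhoods cover $\partial\Delta(R_0)$. Together with $\Delta(R_0)$ they contain $\Delta(R_0+\varepsilon')$ for some $\varepsilon'>0$. The local extensions agree with $\Phi$ on each overlap with $\Delta(R_0)$, and where two of them overlap outside $\Delta(R_0)$ each such overlap is connected and meets $\Delta(R_0)$ (I would shrink the neighborhoods to small disks so that this holds), so they agree there too. This gives a local isometric extension to a larger disk, contradicting the maximality of $R_0$. Therefore some $a_0$ with $|a_0|=1$ exists for which $\Phi(\overline{0,R_0a_0})$ is divergent, which is the claim.
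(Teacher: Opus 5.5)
The paper gives no proof of this lemma; it only cites Fujimoto. Your argument for the case $R_0<\infty$ is the standard maximal-disk argument and it works. Your worry about overlaps of the added disks outside $\Delta(R_0)$ needs no shrinking. If $D(s_ia_i,r_i)$ and $D(s_ja_j,r_j)$ meet, they meet at a point of the segment joining their centers, and that segment lies in $\Delta(R_0)$ by convexity. So every overlap is convex and meets $\Delta(R_0)$, where both extensions equal $\Phi$.

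The genuine gap is the case $R_0=\infty$. Your claim that every ray is then divergent is false, and so is its justification: a ray of infinite length can stay in a compact set.

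\emph{Counterexample.} Take $S=\C/\mathbb{Z}$ with the metric induced by $|dw|^2$. The maximal $\Phi$ is the covering map $\C\to S$, so $R_0=\infty$. The ray $s\mapsto s \bmod \mathbb{Z}$ stays on a compact circle.

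You cannot avoid this case by choosing a smaller radius, since then $\Phi$ is defined on the closed disk and no ray diverges. Your contradiction argument does not apply either, because there is no larger disk to contradict maximality.

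\emph{A global argument is needed.} A local isometry from the complete plane into the connected surface $S$ is a Riemannian covering map. Its deck transformations are fixed-point-free holomorphic isometries of $\C$, hence translations, and they form a discrete group $\Lambda$. Because $S$ is open, $\Lambda$ has rank at most one.

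If $\Lambda=\{0\}$, every ray diverges. If $\Lambda=\mathbb{Z}\omega$, put $a_0=i\omega/|\omega|$. Then $w\mapsto\mathrm{Re}(w\bar a_0)$ is $\Lambda$-invariant, so it descends to a continuous function on $S$. This function is bounded on every compact set but equals $s$ at $\Phi(sa_0)$. Hence the curve $\Phi(sa_0)$, $0\le s<\infty$, is divergent.
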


Let $x=(x_0,\ldots,x_{n}): S\rightarrow\R^{n+1}$ be the immersion of a minimal surface $S$ into $\R^{n+1}$. Let $(u,v)$ be a local isothermal coordinate of $S$. Then $z=u+iv$ is a local holomorphic coordinate of $S$. The generalized Gauss map $g$ of $x$ is defined (locally) by
$$ g:S\rightarrow\P^{n}(\C), g:=\left(\dfrac{\partial x_0}{\partial z}:\cdots:\dfrac{\partial x_{n}}{\partial z}\right).$$
We set 
$$G_z=\left(\dfrac{\partial x_0}{\partial z},\ldots,\dfrac{\partial x_{n}}{\partial z}\right).$$
Then, $G_z$ is a local reduced representation of $g$. If $\xi$ is another local holomorphic, then 
$$G_\xi=G_z\cdot\left (\dfrac{dz}{d\xi}\right)$$
(this yields that the map $g$ is global well-defined). Also, the metric $ds^2$ on $S$ induced by the canonical metric on $\R^{n+1}$ satisfies
$$ ds^2=2\|G_z\|^2|dz|^2.$$
We note that, $g$ is holomorphic since $S$ is minimal.

\begin{proof}[Proof of Theorem \ref{1.1}]
We consider the holomorphic curve $f$ from $S$ into $\P^n(\C)$ with the local reduced representations $F_z=(f_0,\ldots,f_n)$, where $f_j=Q_j^d(g_0,\ldots,g_n)$ and $G_z=(g_0,\ldots,g_n)$ is a local reduced representation of $g$ on a holomorphic local chart $(U,z)$ of $S$.   Denote by $H_j\ (0\le j\le n+1)$ the hyperplane of $\P^n(\C)$ defined by the following linear forms (denoted again by $H_j$):
$$H_j(x_0,\ldots,x_n)=
\begin{cases}
x_j&\text{ if }0\le j\le n,\\
\sum_{i=0}^na_ix_i&\text{ if }j=n+1.
\end{cases}
 $$
From the assumption of the theorem, we have
$$D^{n}_{g}(Q)>\dfrac{(2\ell+1)n(n+1)}{2d\ell}.$$
Then, there exist a compact subset $K\subset S$, a real number $\theta>0$ such that
$$1-\theta>\dfrac{(2\ell+1)n(n+1)}{2d\ell}=\frac{(2\ell+1)\sigma_n}{d\ell},$$
a divisor $\nu$ and a bounded continuous function $k$ (we may assume $0\le k<1$) with mild singularities on $S\setminus K$ such that: $\nu\ge c'$ on $\supp\nu$ for a positive constant $c'$ and
$$ [\min\{\nu_{Q(f)},n\}]+[v]=d\ell\theta\Omega_g +dd^c[\log k^2],$$
i.e.,
$$ [\min\{\nu_{H_{n+1}(F)},n\}]+[v]=d\ell\theta\Omega_g +dd^c[\log k^2]$$
in the sense of currents. We set $h_z=k\|F_z\|^{\theta}$. One has
$$ \nu_{h_{z}}\ge c\text{ on }\supp\nu_{h_{z}}, \text{ where }c=\min\{1,c'\}. $$
Choose a rational number $\epsilon\ (>0)$ such that
$$\dfrac{d\ell(1-\theta)-(2\ell+1)\sigma_n}{d\ell\sigma_{n+1}+\tau_n+d\ell\theta}>\epsilon>\dfrac{d\ell(1-\theta)-(2\ell+1)\sigma_n}{c+d\ell\sigma_{n+1}+\tau_n+d\ell\theta}$$
and define
\begin{align*}
u&:=d\ell(1-\theta)-2\ell\sigma_n-\epsilon d\ell\left (\sigma_{n+1}+\theta\right)>\sigma_n+\epsilon\tau_n,\\ 
\rho&:=\dfrac{1}{u}(\sigma_n+\epsilon\tau_n),\\
\rho^*&:=\dfrac{1}{(1-\rho)u}=\dfrac{1}{d\ell(1-\theta)-(2\ell+1)\sigma_n-\epsilon(d\ell\sigma_{n+1}+\tau_n+d\ell\theta)}. 
\end{align*}
It is clear that $0<\rho<1$ and $c\epsilon\rho^*>1.$

For each $p\ (1\le p\le n)$, one can choose $i_1, \ldots, i_p$ with $0 \leq i_1<\cdots<i_p \leq n$ such that
$$(\psi(F_z)_{jp})_z:=\sum_{l \neq i_1, \ldots, i_p} a_{jl} W_z\left(f_l, f_{i_1}, \cdots, f_{i_p}\right) \not \equiv 0.$$
We consider the set
$$ A_1=\{a\in A; (\psi (F_z)_{jp})_z(a)\ne 0,h_{z}(a)\ne 0\ \forall j=0,\ldots,n+1; p=0,\ldots,n-1\},$$
where $A=S\setminus K$, $z$ is a local holomorphic coordinate around $a$, and define a new pseudo-metric on $A_1$ as follows
$$ d\tau^2=\left (\dfrac{\prod_{j=0}^n\|F_z(H_j)\|^{1-\frac{n}{d}}.\|F_z(H_{n+1})\|}{|((F_z)_{n})_z)|^{1+\epsilon}h_{z}^{1+\epsilon}\prod_{j,p}|(\psi(F_z)_{jp})_z|^{\frac{\epsilon}{n+2}}}\right)^{2\rho^*}|dz|^2.$$
From now on, we will write $\prod_{j,p}$ for $\prod_{j=0}^{n+1}\prod_{p=0}^{n-1}$ for simplicity.
Note that the definitions of $A_1$ and $d\tau^2$ do not depend on the choice of the local holomoprhic coordinates.
Indeed, we have
\begin{align*}
&G_z=(\xi'_z)\cdot G_\xi,\ F_z=(\xi'_z)^{d\ell}\cdot F_\xi,\\
&F_z(H_j)=(\xi'_z)^{d\ell}\cdot F_\xi(H_j),\ v_j(F_z)=(\xi'_z)^{d\ell}\cdot v_j(F_\xi),\\
&\ h_{z}=|\xi'_z|^{d\ell\theta}\cdot h_{\xi},((F_z)_{p})_z=(\xi'_z)^{d\ell(p+1)+\sigma_p}\cdot((F_\xi)_{p})_\xi,\\
&(\psi (F_z)_{ip})_z=(\xi'_z)^{d\ell(p+1)+\sigma_p}\cdot(\psi(F_z)_{ip})_\xi\ (0\le p\le n-1),
\end{align*}
and hence $(\psi (F_z)_{ip})_z(a)=0$ if and only if $(\psi (F_\xi)_{ip})_\xi(a)=0$ for two local holomorphic coordinates $z$ and $\xi$ around $a$. On the other hand, we have
\begin{align*}
&\left (\dfrac{\prod_{j=0}^n\|F_z(H_j)\|^{1-\frac{n}{d}}.\|F_z(H_{n+1})\|}{|((F_z)_{n})_z)|^{1+\epsilon}h_{z}^{1+\epsilon}\prod_{j,p}|(\psi(F_z)_{jp})_z|^{\frac{\epsilon}{n+2}}}\right)^{2\rho^*}|dz|^2\\
=&\left (\dfrac{\prod_{j=0}^n\|F_\xi(H_j)\|^{1-\frac{n}{d}}.\|F_\xi(H_{n+1})\|.(\xi'_z)^{(n+2)d\ell-2\sigma_n\ell}}{|((F_\xi)_{n})_\xi)|^{1+\epsilon}h_{\xi}^{1+\epsilon}\prod_{j,p}|(\psi(F_\xi)_{jp})_\xi|^{\frac{\epsilon}{n+2}}.(\xi'_z)^{(1+\epsilon)(d\ell(n+1)+\sigma_n+d\ell\theta)+\epsilon\sum_{p=0}^{n-1}(d\ell(p+1)+\sigma_p)}}\right)^{2\rho^*}|dz|^2\\
=&\left (\dfrac{\prod_{j=0}^n\|F_\xi(H_j)\|^{1-\frac{n}{d}}.\|F_\xi(H_{n+1})\|}{|((F_\xi)_{n})_\xi)|^{1+\epsilon}h_{\xi}^{1+\epsilon}\prod_{j,p}|(\psi(F_\xi)_{jp})_\xi|^{\frac{\epsilon}{n+2}}}\right)^{2\rho^*}|d\xi|^2
\end{align*}
(since $|\xi'_z|^{2\rho^*((n+2)d\ell-2\sigma_n\ell-(1+\epsilon)(d\ell(n+1)+\sigma_n+d\ell\theta)-\epsilon\sum_{p=0}^{n-1}(d\ell(p+1)+\sigma_p))}|dz^2|=|\xi'_z|^2|dz^2|=|d\xi^2|$).
Then the pseudo-metric $d\tau^2$ is global well-defined.


Since $H_j(F_z),((F_z)_{n})_z,(\psi(F_z)_{jp})_z$ are holomorphic and $k$ are harmonic on $A_1$, $d\tau$ is flat on $A_1$. So, $d\tau$ can be smoothly extended over $K$ to a metric, still call it $d\tau$, on $A_1' =A_1\cup K,$ which is flat outside the compact set $K$.

\begin{claim}\label{cl4.2}
$d\tau$ is complete on $A_1'.$
\end{claim}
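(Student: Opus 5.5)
Completeness will be proved by contradiction, following Fujimoto's scheme. Divergent paths that stay inside $K$ are impossible, and $d\tau$ extends smoothly over $K$, so a failure of completeness must come from a divergent path $\gamma$ of finite $d\tau$-length. Such a path must tend either to the ideal boundary of $S$ or to a point of $A\setminus A_1$.

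\emph{Points of $A\setminus A_1$.} These are points $a$ where some $(\psi(F_z)_{jp})_z$ or $h_z$ vanishes. We compute the order of $d\tau=|z|^{-\beta\rho^*}w^*|dz|$ near $a$ and show that $\beta\rho^*\ge 1$, so every path ending at $a$ has infinite length. Two contributions push $\beta$ up. First, at a zero of $(\psi(F_z)_{jp})_z$ the term $\epsilon/(n+2)$ contributes. Second, $\nu_{h_z}(a)\ge c$ contributes $(1+\epsilon)\nu_{h_z}(a)$. Against these, by Theorem \ref{thm3.5} and the estimate (\ref{new4}), the numerator $\prod_j|F_z(H_j)|^{1-n/d}|F_z(H_{n+1})|$ divided by $|(F_z)_n|$ has order at least $-\min\{n,\nu_{H_{n+1}(F)}(a)\}$, and $\nu_{h_z}\ge\min\{n,\nu_{H_{n+1}(F)}\}$ absorbs this. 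What survives is $\beta\ge\epsilon\nu_{h_z}(a)\ge \epsilon c$, or the $\psi$-contribution. The choice of $\epsilon$ gives exactly $c\epsilon\rho^*>1$. For the $\psi$-zeros I would reuse the integer order combined with the same balancing, perhaps after adjusting the exponent bookkeeping, to get $\beta\rho^*\ge1$ there as well.

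\emph{Ideal boundary.} Suppose $\gamma$ diverges to the boundary of $S$ with finite $d\tau$-length. Apply Lemma \ref{lem4.1} to the flat metric $d\tau^2$ on $A_1'$ (or on a neighborhood avoiding $K$): this gives a local isometry $\Phi:\Delta(R_0)\to A_1$ with $\Phi(0)$ far out, $R_0<\infty$ because the length is finite, and a divergent ray $\Phi(\overline{0,R_0a_0})$. Since $d\tau$ is flat and nonvanishing on $A_1$, we can write $\Phi^*d\tau=|dw|$. Solving for the Gauss-map data then gives
$$\Big|\frac{dz}{dw}\Big|=\Big(\frac{|(F_z)_n|^{1+\epsilon}h_z^{1+\epsilon}\prod|\psi_{jp}|^{\epsilon/(n+2)}}{\prod_j|F_z(H_j)|^{1-n/d}|F_z(H_{n+1})|}\Big)^{\rho^*},$$
where all quantities are taken in the coordinate $w$ and are functions on $\Delta(R_0)$. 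Lemma \ref{ML} applies to $g\circ\Phi$ on $\Delta(R_0)$. Using $|F_p(H_j)|\ge|\psi_{jp}|$ and $\|F\|\ge|H_j(F)|$, it bounds the combination above by $C(2R_0/(R_0^2-|w|^2))^{\sigma_n+\epsilon\tau_n}$, up to a power of $\|G\|$ that we must match.

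The key step is the exponent bookkeeping: express the induced metric $ds=\sqrt2\|G_w\|\,|dw|$ in terms of $\|F\|\le C\|G\|^{d\ell}$ and the quantity bounded by Lemma \ref{ML}. With $u$ and $\rho$ as defined, the relation $u(1-\rho)\rho^*=1$ should yield
$$ds\le C'\Big(\frac{2R_0}{R_0^2-|w|^2}\Big)^{\rho},\qquad 0<\rho<1.$$
The $2\ell\sigma_n$ correction in $u$, which arises from the Wronskian weight $d\ell(n+1)+\sigma_n$ and from $F=Q^d(G)$, is what makes the powers of $\|G\|$ cancel; this cancellation is the main obstacle, and the computation must be done carefully. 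Once it is in hand, the length of $\Phi(\overline{0,R_0a_0})$ in $ds$ is at most $\int_0^{R_0}C'(2R_0/(R_0^2-t^2))^\rho dt<\infty$ since $\rho<1$. This contradicts the completeness of $S$, because that path is divergent. Hence $d\tau$ is complete on $A_1'$.
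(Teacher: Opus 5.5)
Your route is the paper's: a divergent path of finite $d\tau$-length either ends at a zero of $h_z\prod_{j,p}(\psi(F_z)_{jp})_z$, which is excluded by an order count based on (\ref{new4}), or it runs to the ideal boundary, where Lemma \ref{lem4.1} and Lemma \ref{ML} give $\Phi^*ds\le C\bigl(\frac{2R_0}{R_0^2-|w|^2}\bigr)^{\rho}|dw|$ with $\rho<1$. However, the two steps you postpone are where the proof happens, and the first one does not close with the choice of $\epsilon$ you cite. After absorbing via (\ref{new4}) and $\nu_{h_z}\ge\min\{n,\nu_{H_{n+1}(F)}\}$, what survives is $\beta\ge\epsilon\nu_{(F_z)_n}(a)+\epsilon\nu_{h_z}(a)+\frac{\epsilon}{n+2}\sum_{j,p}\nu_{\psi_{jp}}(a)$. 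Take a zero of some $\psi_{jp}$ with $p\ge1$ at which neither $h_z$ nor $(F_z)_n$ vanishes. There you only get $\beta\rho^*\ge\frac{\epsilon\rho^*}{n+2}$, and $c\epsilon\rho^*>1$ with $c=\min\{1,c'\}$ does not make this $\ge1$; integrality of $\nu_{\psi_{jp}}$ does not help. You need $\epsilon\rho^*\ge n+2$ as well, and this costs nothing. Put $X=d\ell(1-\theta)-(2\ell+1)\sigma_n>0$ and $Y=d\ell\sigma_{n+1}+\tau_n+d\ell\theta$. Then $\rho^*=1/(X-\epsilon Y)$, so $c\epsilon\rho^*>1$ iff $\epsilon>X/(c+Y)$, and the window $X/(c+Y)<\epsilon<X/Y$ is nonempty for every $c>0$. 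So take $c=\min\{c',\frac{1}{n+2}\}$. (The paper's estimate ending in $\le-c\epsilon\rho^*$ tacitly needs the same choice.)

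In the boundary case the bookkeeping does close, but not with the inequality you quote. Your formula for $|dz/dw|$ with exponent $\rho^*$ holds for the $z$-quantities composed with $\Phi$. Rewriting $((F_z)_n)_z$ and $(\psi(F_z)_{jp})_z$ as $w$-Wronskians of $\tilde F=F_z\circ\Phi$ gives $|dz/dw|^{1+\rho^*(\sigma_n+\epsilon\tau_n)}=(\cdots)_w^{\rho^*}$, and $1+u\rho\rho^*=\frac{1}{1-\rho}$ turns the exponent into $\frac1u$. To substitute this into $\Phi^*ds=\sqrt2\,\|G_z\circ\Phi\|\,|dz/dw|\,|dw|$, you must bound $\|G\|$ above, i.e. use $\|G\|^{d\ell}\le C\|F\|$. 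This is where the general position of $Q_0,\dots,Q_n$ enters; the trivial $\|F\|\le C\|G\|^{d\ell}$ you quote points the wrong way. With it the powers match exactly, because $u/(d\ell)=\gamma-\epsilon(\sigma_{n+1}+\theta)$ is the $|F_0|$-exponent in Lemma \ref{ML}. (The $2\ell\sigma_n$ in $u$ is $d\ell\cdot\frac{n(n+1)}{d}$, coming from the exponents $1-\frac nd$ on $|H_j(F)|$, not from a Wronskian weight.) Lemma \ref{ML}, applied with $|\psi_{jp}|\le|F_p(H_j)|$, then yields the power $(\sigma_n+\epsilon\tau_n)/u=\rho$.

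You should also justify that the ray $\Phi(\overline{0,R_0a_0})$ diverges in $S$ and not merely in $A_1$. This holds because $R_0$ is smaller than the $d\tau$-distance from $\Phi(0)$ to $K$, and the points of $A\setminus A_1$ are at infinite $d\tau$-distance by the first case. Finally, the flatness of $d\tau$ on $A_1$, which you and the paper take for granted, requires $\log h_z$ to be harmonic off its zeros. This holds for $h_z=k\|G_z\|^{d\ell\theta}$, since $dd^c\log(k^2\|G_z\|^{2d\ell\theta})=[\min\{\nu_{Q(g)},n\}]+[\nu]$, but not for $k\|F_z\|^{\theta}$. Since $\|F\|$ and $\|G\|^{d\ell}$ are comparable, making this replacement changes only constants.
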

Indeed, suppose in contrast that $A_1'$ is not complete with $d\tau$. Then there is a divergent curve $\gamma: [0,1)\rightarrow A_1'$ with finite length. By using only the last segment of $\gamma$ if necessary, we may assume that the distance $d(\gamma,K)$ between $\gamma$ and $K$ exceeds a positive number $d^*$. Then, as $t\rightarrow 1$ there are only two cases: either $\gamma(t)$ tends to a point $a$ with
$$ \prod_{i=0}^{n+1}(h_z\prod_{p=0}^{n-1}|\psi(F_z)_{ip}|)(a)=0$$
for a local holomorphic coordinate $z$ around $a$, or else $\gamma (t)$ tends to the boundary of $S$.

For the first case, by Theorem \ref{thm3.5}, we have
\begin{align*}
\nu_{d\tau}(a)&=\biggl(\big(1-\dfrac{n}{d}\big)\sum_{j=0}^n\nu_{F_z(H_j)}(a)+\nu_{F_z(H_{n+1})}(a)\\
&\  \ -(1+\epsilon)\nu_{((F_z)_{n})_z}(a)-(1+\epsilon)\nu_{h_z}(a)-\dfrac{\epsilon}{n+2}\sum_{j=0}^{n+1}\sum_{p=0}^{n-1}\nu_{\psi(F_z)_{jp}}(a)\biggl)\rho^*\\
&= -\biggl (\nu_{((F_z)_{n})_z}(a)-\big(1-\dfrac{n}{d}\big)\sum_{j=0}^{n}\nu_{F_z(H_j)}(a)-\nu_{F_z(H_{n+1})}(a)+\min\{\nu_{F_z(H_{n+1})},n\}\\ 
&\  \ + (v_{h_z}-\min\{\nu_{F_z(H_{n+1})},n\} )+\bigl(\epsilon\nu_{((F_z)_{n})_z(a)}+\dfrac{\epsilon}{n+2}\sum_{j=0}^{n+1}\big(\nu_{h_{z}}+\sum_{p=0}^{n-1}\nu_{\psi(F_z)_{jp}}(a)\big)\bigl)\biggl)\rho^*\\
&\le -\epsilon\rho^*\nu_{((F_z)_{n})_z}(a)-\dfrac{\epsilon\rho^*}{n+2}\sum_{j=0}^{n+1}\big(\nu_{h_{z}}+\sum_{p=0}^{n-1}\nu_{\psi(F_z)_{jp}}(a)\big)\le -c\epsilon\rho^*
\end{align*}
(since $\nu_{h_{z}}>c$ on $\supp\nu_{h_z}$). Then, there is a positive constant $C$ such that
$$ |d\tau|\ge\dfrac{C}{|z-z(a)|^{c\epsilon\rho^*}}|dz|$$
in a neighborhood of $a$. Then we get
$$ L_{d\tau}(\gamma)=\int_0^1\|\gamma'(t)\|_{\tau}dt=\int_{\gamma}d\tau \ge\int_\gamma \dfrac{C}{|z-z(a)|^{c\epsilon\rho^*}}|dz|=+\infty$$
($\gamma (t)$ tends to $a$ as $t\rightarrow 1$). This is a contradiction. Therefore, the second case must occur, that is $\gamma (t)$ tends to the boundary of $S$ as $t\rightarrow 1$.  

Choose $t_0$ close $1$ enough so that $L_{d\tau}(\rho|_{(t_0,1)})<d^*/3$. Take a disk $\Delta$ around $\gamma(t_0)$ in the metric induced by $d\tau$. Since $d\tau$ is flat, by Lemma \ref{lem4.1}, there is an isometric $\Phi$ from an ordinary disk $\Delta(r)=\{|\omega|<r\}\subset\C$ to $\Delta$  with $\Phi(0)=\gamma(t_0)$. Extend $\Phi$ as a local isometric into $A_1$ to the largest disk possible $\Delta(R)=\{|\omega|<R\}$, and denote again by $\Phi$ this extension (for simplicity, we may consider $\Phi$ as the exponential map).  

Since $\int_{\gamma|_{[t_0,1)}}d\tau<d^*/3$, we have $R\le d^*/3$. Hence, the image of $\Phi$ is bounded away from $K$ by distance at least $2d^*/3$. Since $\Phi$ cannot be extended to a larger disk, the image of $\Phi$ must go to the boundary $A_1$. But, this image cannot go to points $a$ of $A'_1$ with $ \prod_{i=0}^{n+1}(h_z\prod_{p=0}^{n-1}|\psi(F_z)_{ip}|)(a)=0$, since we have already shown that $\gamma(t_0)$ is infinitely far away in the metric with respect to these points. Then the image of $\Phi$ must go to the boundary $S$. Hence, by again Lemma \ref{lem4.1}, there exists a point $w_0$ with $|w_0|= R$ so that $\Gamma=\Phi(\overline{0,w_0})$ is a divergent curve on $S$.

We now show that $\Gamma$ has finite length in the original metric $ds^2$ on $S$, which is contrary to the completeness of $S$. Let $\tilde g:=g\circ\Phi :\Delta(R)\rightarrow \P^n(\C)$ be a holomorphic curve. Indeed, for a local holomorphic coordinate $z$ on the image of $\Phi$, the map $\tilde g$ has a local reduced representation
$$\tilde G=(\tilde g_0,\ldots,\tilde g_n),$$
where $\tilde g_i=g_i\circ\Phi\ (0\le i\le n).$ We define $\tilde f=f\circ\Phi$ and $\tilde F=F_z\circ\Phi$.
Hence, we have:
\begin{align*}
\Phi^*ds^2&=2\|G\circ\Phi\|^2|\Phi^*dz|^2=2\|\tilde G\|^2\left|\dfrac{d(z\circ\Phi)}{dw}\right|^2|dw|^2,\\
(\tilde F_{n})_w&=((F_z\circ\Phi)_{n})_w=((F_z)_{n})_z\circ\Phi\cdot\left(\dfrac{d(z\circ\Phi)}{dw}\right)^{\sigma_n},\\
(\psi(\tilde F)_{jp})_w&=(\psi(F_z\circ\Phi)_{jp})_w=(\psi(F_z)_{jp})_z\cdot\left(\dfrac{d(z\circ\Phi)}{dw}\right)^{\sigma_p}, (0\le j\le n+1,0\le p\le n-1).
\end{align*}
On the other hand, since $\Phi$ is locally isometric,
\begin{align*}
|dw|&=|\Phi^*d\tau|\\
&= \left (\dfrac{\prod_{j=0}^n\|F_z(H_j)\circ\Phi\|^{1-\frac{n}{d}}.\|F_z(H_{n+1})\circ\Phi\|}{|((F_z)_{n})_z\circ\Phi)|^{1+\epsilon}|h_{z}\circ\Phi|^{1+\epsilon}\prod_{j,p}|(\psi(F_z)_{jp})_z\circ\Phi|^{\frac{\epsilon}{n+2}}}\right)^{\rho^*}\biggl|\dfrac{d(z\circ\Phi)}{d\omega}\biggl||d\omega|\\
&= \left (\dfrac{\prod_{j=0}^n\|\tilde F(H_j)\|^{1-\frac{n}{d}}.\|\tilde F(H_{n+1})\|}{|(\tilde F_n)_{\omega}|^{1+\epsilon}|h_{z}\circ\Phi|^{1+\epsilon}\prod_{j,p}|(\psi(\tilde F)_{jp})_\omega|^{\frac{\epsilon}{n+2}}}\right)^{\rho^*}\biggl|\dfrac{d(z\circ\Phi)}{d\omega}\biggl|^{1+u\rho\rho^*}|d\omega|
\end{align*}
(because $1+((1+\epsilon)\sigma_n+\epsilon\sum_{p=0}^{n-1}\sigma_p)\rho^*=1+u\rho\rho^*$).
This implies that
\begin{align*}
\left|\dfrac{d(z\circ\Phi)}{dw}\right|&=\left (\dfrac{|(\tilde F_n)_{\omega}|^{1+\epsilon}|h_{z}\circ\Phi|^{1+\epsilon}\prod_{j,p}|(\psi(\tilde F)_{jp})_\omega|^{\frac{\epsilon}{n+2}}}{\prod_{j=0}^n\|\tilde F(H_j)\|^{1-\frac{n}{d}}.\|\tilde F(H_{n+1})\|}\right)^{\frac{\rho^*}{1+u\rho\rho^*}}\\
&\le \left (\dfrac{|(\tilde F_n)_{\omega}|^{1+\epsilon}|h_{z}\circ\Phi|^{1+\epsilon}\prod_{j,p}|(\tilde F_p)_\omega(H_j)|^{\frac{\epsilon}{n+2}}}{\prod_{j=0}^n\|\tilde F(H_j)\|^{1-\frac{n}{d}}.\|\tilde F(H_{n+1})\|}\right)^{\frac{\rho^*}{1+u\rho\rho^*}}\\
&=\left (\dfrac{|(\tilde F_n)_{\omega}|^{1+\epsilon}|h_{z}\circ\Phi|^{1+\epsilon}\prod_{j,p}|(\tilde F_p)_\omega(H_j)|^{\frac{\epsilon}{n+2}}}{\prod_{j=0}^n\|\tilde F(H_j)|^{1-\frac{n}{d}}.\|\tilde F(H_{n+1})\|}\right)^{1/u}.
\end{align*}
Hence, we have
\begin{align*}
\Phi^*ds&\le\sqrt{2}\|\tilde G\|\left (\dfrac{|(\tilde F_n)_{\omega}|^{1+\epsilon}|h_{z}\circ\Phi|^{1+\epsilon}\prod_{j,p}|(\tilde F_p)_\omega(H_j)|^{\frac{\epsilon}{n+2}}}{\prod_{j=0}^n\|\tilde F(H_j)\|^{1-\frac{n}{d}}.\|\tilde F(H_{n+1})\|}\right)^{1/u}|dw|\\
&=B\left (\dfrac{\|\tilde F_0\|^{\frac{u}{d\ell}}|(\tilde F_n)_{\omega}|^{1+\epsilon}|h_{z}\circ\Phi|^{1+\epsilon}\prod_{j,p}|(\tilde F_p)_\omega(H_j)|^{\frac{\epsilon}{n+2}}}{\prod_{j=0}^n\|\tilde F(H_j)\Phi\|^{1-\frac{n}{d}}.\|\tilde F(H_{n+1})\|}\right)^{1/u}|dw|,
\end{align*}
for a positive constant $B$.
We note that $\dfrac{u}{d\ell}=1-\theta-\dfrac{n(n+1)}{d}-\epsilon(\sigma_{n+1}+\theta)$. Then the inequality (\ref{lem3.7}) yields that the conditions of Lemma \ref{ML} are satisfied. Then, by applying Lemma \ref{ML} we have
$$ \Phi^*ds\le C\left (\dfrac{2R}{R^2-|w|^2}\right)^\rho|dw|,$$
for a positive constant $C$. Also, we have $0<\rho<1$. Then
$$L_{ds^2}(\Gamma)\le\int_{\Gamma}ds=\int_{\overline{0,w_0}}\Phi^*ds\le C\cdot\int_{0}^R\left(\dfrac{2R}{R^2-|w|^2}\right)^{\rho}|dw|<+\infty. $$
This is contrary to the assumption of the completeness of $S$ with respect to $ds^2$. Thus, Claim \ref{cl4.2} is proved.

We note that the metric $d\tau^2$ on $A_1'$ is flat outside $K$, and $A_1'$ is complete with this metric by Claim \ref{cl4.2}. Then $A_1'$ has finite total curvature. By the theorem \cite[Theorem 13, p.61]{Hu61} of  Huber (also see \cite[Theorem 4.8]{Fu91}), it follows that $A_1'$ is finite type (i.e., biholomorphic with a compact Riemann surface from which finitely many points have been removed). Hence, there is a compact subregion of $A_1'$ whose complement is the union of a finite number of doubly-connected regions. Therefore, the functions $\prod_{j=0}^{n+1}\prod_{p=0}^{n-1}|\psi(F_z)_{jp}|$ must have only a finite number of zeros. Hence, $S$ is conformally equivalent to a compact Riemann surface $\overline{S}$ with a finite number of points removed. By the assumption, in a neighborhood of each of those points the Gauss map $g$ satisfies
$$ \delta_{g}^{H}(Q)> \dfrac{n(n+1)}{d}.$$
Then, Lemma \ref{cor2.9} yields that the Gauss map $g$ is not essential singular at those points. In other words, $g$ can be extended to a holomorphic map from $\overline{S}$ to $\P^n(\C)$. If the homology class represented by the image of $g:S\rightarrow V\subset\P^n(\C)$ is $v$ times the fundamental homology class of $\P^n(\C)$, then the total curvature of $S$ is defined by
$$C(S)=\iint\mathcal K(s)dS=-2\pi v,$$
where $\mathcal K(s)$ is the Gaussian curvature of $S$. Hence Theorem \ref{1.1} is proved.
\end{proof}

\begin{proof}[Proof of Theorem \ref{1.3}]
 Suppose contrarily that
$$\delta^{S,n}_{g,S}(Q)>\frac{n(n+1)}{d}+\frac{n(n+1)}{2\ell d}.$$
Let $G=(g_0,\ldots,g_n)$ be a global reduced representation of $g$.  Then, there are a constant $\theta$ and a continuous subharmonic functions $u$ on $S$ satisfying conditions $(D_1),(D_2)$ with respect to the number $\theta$ such that
$$1-\theta> \frac{n(n+1)}{d}+\frac{n(n+1)}{2\ell d}.$$
Let $f$ be the holomorphic curve from $S$ into $\P^n(\C)$ with the reduced representation $F=(Q_0^d(G),\ldots,Q_n^d(G))$. We may assume that $\|G\|^{d\ell}\le \|F\|.$ We use the same hyperplanes $H_0,\ldots,H_{n+1}$ as similar as in the proof of Theorem \ref{1.1}.

%
%
Set:
$$\gamma:= (1-\theta)-\frac{n(n+1)}{d}>\frac{\sigma_n}{d\ell}.$$
Then, we may choose a rational number $\epsilon\ (>0)$ such that
$$\gamma-\epsilon\sigma_{n+1}=\frac{\sigma_n+\epsilon\tau_n}{d\ell}.$$
On a local holomorphic chart $(U,z)$ of $S$, we set
$$ \lambda_z=\left( \dfrac{|F_{0}|^{\gamma-\epsilon\sigma_{n+1}}e^{u}|(F_n)_z|\prod_{p=0}^{n}|(F_{p})_z|^{\epsilon}}{|F(H_{n+1})|\prod_{j=0}^n(|F(H_j)|^{1-\frac{n}{d}}\prod_{j,p}\log (\delta/\varphi_{z,p}(H_j)))}\right )^{\frac{1}{\sigma_n+\epsilon\tau_n}},$$
where $\varphi_{z,p}(H_j)=\dfrac{|(F_{p})_z(H_j)|}{|(F_{p})_z|}, j=0,\ldots,n+1,$ and $\delta$ is the number satisfying the conclusion of Theorem \ref{thm3.3} with $F,\varphi_{z,p}(H_j)$ in place of $F,\varphi_{p}(H_j)$.

We define a pseudo-metric $d\tau^2:=\lambda_z^2|dz|^2$ on each chart $(U,z)$ of $S$. We will show that $d\tau^2$ is well-defined, i.e., not depend on the choice of the local coordinate. Indeed, if we have another local coordinate $\xi$, we have
$(F_p)_z=(\xi'_z)^{\sigma_p}\cdot (F_p)_\xi\ (0\leq p\le n+1),$ and hence
\begin{align*}
\lambda_{z}&=\left( \dfrac{|F_{0}|^{\gamma-\epsilon\sigma_{n+1}}e^u|(F_n)_z|\prod_{p=0}^{n}|(F_p)_z|^{\epsilon}}{|F(H_{n+1})|\prod_{j=0}^n(|F(H_j)|^{1-\frac{n}{d}}\prod_{j,p}\log (\delta/\varphi_{z,p}(H_j)))}\right )^{\frac{1}{\sigma_n+\epsilon\tau_n}}\\
    &=\left( \dfrac{|F_0|^{\gamma-\epsilon\sigma_{n+1}} e^u|(F_n)_\xi|\prod_{p=0}^{n}|(F_p)_\xi|^{\epsilon}|\xi_z'|^{\sigma_n+\epsilon\tau_n}}{|F(H_{n+1})|\prod_{j=0}^n(|F(H_j)|^{1-\frac{n}{d}}\prod_{j,p}\log (\delta/\varphi_{\xi,p}(H_j)))}\right )^{\frac{1}{\sigma_n+\epsilon\tau_n}}\\
&=\left( \dfrac{|F_0|^{\gamma-\epsilon\sigma_{n+1}}e^u|(F_n)_\xi|\prod_{p=0}^{n}|(F_p)_\xi|^{\epsilon}}{|F(H_{n+1})|\prod_{j=0}^n(|F(H_j)|^{1-\frac{n}{d}}\prod_{j,p}\log (\delta/\varphi_{\xi,p}(H_j)))}\right )^{\frac{1}{\sigma_n+\epsilon\tau_n}}|\xi'_z|.
\end{align*}
Hence $\lambda_z^2|dz^2|=\lambda_\xi^2|\xi'_z|^2|dz^2|=\lambda_\xi^2|d\xi^2|$. Then, $d\tau^2$ is well-defined.

It is obvious that $d\tau^2$ is continuous on $S\setminus\bigcup_{j=1}^q f^{-1}(H_j)$. Take a point $a$ such that $\prod_{j=1}^q H_j(F)=0$ (for a loal holomorphic coordinate $z$ around $a$). We have
$$ \lim_{t\rightarrow a}(e^u(t)\cdot |z(t)-z(a)|^{-\min\{n,\nu_{H_{n+1}(F)}(a)\}})<\infty.$$
With the help of Theorem \ref{thm3.5}, this implies that
\begin{align*}
d\tau_z(a)&\ge\dfrac{1}{\sigma_n+\epsilon\tau_n}\biggl (\nu_{(F_n)_z}-\sum_{j=0}^n(1-\frac{n}{d})\nu_{H_j(F)}+\min\{n,\nu_{H_{n+1}(F)}(a)\}-\nu_{H_{n+1}(F)}(a))\biggl)\\ 
&\ge\dfrac{1}{\sigma_n+\epsilon\tau_n}\biggl (\nu_{(F_n)_z}-\sum_{j=0}^{n+1}(\nu_{H_j(F)}(a)-\min\{n,\nu_{H_{n+1}(F)}(a)\})\biggl) \ge 0.
\end{align*}
Therefore $d\tau$ is continuous at $a$. This yields that $d\tau$ is a continuous pseudo-metric on $S$.

We now prove that $d\tau^2$ has strictly negative curvature. Similarly as above, we have
\begin{align*}
dd^c&\left[\log\dfrac{|(F_n)_z|}{|H_{n+1}(F)\prod_{j=0}^{n}|H_j(F)|^{1-\frac{n}{d}}}\right]+dd^c[\log e^u]\\
&\ge dd^c[\log e^u]+\sum_{i=0}^n\frac{n}{d}[\nu_{H_j(F)}]-\sum_{i=0}^{n+1}[\min\{n,\nu_{H_j(F)}\}]\ge 0.
\end{align*}
Then by Theorems \ref{thm3.3} and \ref{thm3.4}, similarly as (\ref{new3}) we have
\begin{align*}
dd^c\log\lambda_z&\ge\dfrac{\gamma-\epsilon\sigma_{n+1}}{\sigma_n+\epsilon\tau_M}d\ell\Omega_g+\dfrac{\epsilon}{2(\sigma_n+\epsilon\tau_n)}dd^c\log\left(|(F_0)_z|\cdots|(F_n)_z|\right)\\
& +\dfrac{1}{2(\sigma_n+\epsilon\tau_n)}dd^c\log\dfrac{\prod_{p=0}^{n-1}|(F_p)_z|^{2\epsilon}}{\sum_{j=0}^{n+1}\prod_{p=0}^{n-1}\log^{4}(\delta/\varphi_{z,p}(H_j))}\\
&\ge C\left (\dfrac{|(F_0)_z|^{1-\epsilon\sigma_{n+1}}|(F_n)_z|\prod_{p=0}^n|(F_p)_z|^\epsilon}{\prod_{j=0}^{n+1}(|H_j(F)|\prod_{p=0}^{n-1}\log(\delta/\varphi_{z,p}(H_j)))}\right)^{\frac{2}{\sigma_n+\epsilon\tau_n}}dd^c|z|^2
\end{align*}
for a positive constant $C$. On the other hand, we have $e^u\le \|G\|^{d\ell\theta}\le\|F\|^{\theta}$, and hence
\begin{align*}
|(F_{0})_z|^{1-\epsilon\sigma_{n+1}}&=|F_{0}|^{\gamma-\epsilon\sigma_{n+1}+\theta+\frac{n(n+1)}{d}}\\
&\ge |F_{0}|^{\gamma-\epsilon\sigma_{n+1}}e^u\cdot\prod_{j=0}^n|H_j(F)|^{\frac{n}{d}}.
\end{align*}
This implies that $\Delta \log\eta^2\ge C\eta^2$. Therefore, $d\tau^2$ has strictly negative curvature.

As we known that the universal covering surface of $S$ is biholomorphic to $\C$ or a disk in $\C$. If the universal covering of $S$ is holomorphic to $\C$ then from \cite[Lemma 3]{QT} we have
$$\delta^{S,n}_{g,S}(Q)\le\delta_g^n(Q)\le \dfrac{n(n+1)}{d},$$
which is contrary to the supposition. Now, we consider the case where the universal covering surface $\tilde S$ of $S$ is biholomorphic to the unit disc $\Delta$. Without loss of generality, we may assume that $\tilde S=\Delta$ and let $\Phi :\Delta\rightarrow S$ be the covering map. From Lemma \ref{lem3.6} we have
$$ \Phi^*d\tau^2\le d\sigma_{\Delta}^2,$$
where $d\sigma_{\Delta}=\dfrac{2|dz|^2}{1-|z|^2}$ with the complex coordinate $z$ on $\Delta$.

Since $S$ has total finite curvature, $S$ is conformally equivalent to a compact surface $\overline S$ punctured at a finite number of points $P_1,\ldots,P_m$. Take disjoint neighborhoods $U_i$ of $P_i\ (1\le i\le r)$ in $\overline S$ and biholomorphic maps $\phi_i:U_i\rightarrow\Delta$ with $\phi_i(P_i)=0$. Note that, the Poincare-metric on $\Delta^*=\Delta\setminus\{0\}$ is given by $d\sigma_{\Delta^*}^2=\dfrac{4|dz|^2}{|z|^2\log^2|z|^2}$. Hence, by the decreasing distance property, one has
$$\Phi^*d\tau^2\le d\sigma_\Delta^2\le C\cdot(\Phi\circ\phi_i^{-1})^*d\sigma_{\Delta^*}^2\ (1\le i\le r)$$
for a positive constant $C$. This implies that
$$\int_{U_i}\Omega_{d\tau^2}\le \int_{\Phi^{-1}(U_i)}\Phi^*\Omega_{\sigma_\Delta^2}\le vC\int_{\Delta^*}\Omega_{d\sigma_{\Delta^*}^2}<\infty.$$
where $v$ is the number of the sheets of the covering $\Phi$. This yields that
$$\int_S\Omega_{d\tau^2}\le \int_{S\setminus\bigcup_{i=1}^rU_i}\Omega_{d\tau^2}+vCr\int_{\Delta^*}\Omega_{d\sigma_{\Delta^*}^2}<\infty.$$

Now, denote by $ds^2$ the original metric on $S$. Similar as (\ref{new3}), we have
$$ dd^c\log\lambda_z \ge\dfrac{\gamma-\epsilon\sigma_{n+1}}{\sigma_n+\epsilon\tau_n}d\ell\Omega_g.$$ 
Then there is a subharmonic function $v_z$ such that
\begin{align*}
\lambda_z^2|dz|^2&=e^{v_z}\|G_z\|^{2d\ell\frac{\gamma-\epsilon\sigma_{n+1}}{\sigma_n+\epsilon\tau_n}}|dz|^2=e^{v_z}\|G_z\|^{2\frac{d\ell(\gamma-\epsilon\sigma_{n+1})-\sigma_n-\epsilon\tau_n}{\sigma_n+\epsilon\tau_n}}\|G_z\|^2 |dz|^2=e^w ds^2
\end{align*}
for a subharmonic function $w$ on $S$. Since $S$ is complete with respect to $ds^2$, by a result of S. T. Yau \cite{Y76} and L. Karp \cite{K82} we have
$$ \int_S\Omega_{d\tau^2}=\int_Se^w\Omega_{ds^2}=+\infty.$$
This is a contradiction.

Then we have the derised inequality
$$\delta^{S,n}_{g,S}(Q)\le\frac{n(n+1)}{d}+\frac{n(n+1)}{2\ell d}.$$
The theorem is proved.
\end{proof}

\end{document}